\theoremstyle{plain}
\newtheorem{thm}{Theorem}[section]
\newtheorem{theorem}[thm]{Theorem}
\newtheorem{proposition}[thm]{Proposition}
\newtheorem{corollary}[thm]{Corollary}
\newtheorem{lemma}[thm]{Lemma}
\newtheorem{maintheorem}{Theorem}
\theoremstyle{definition}
\newtheorem{definition}[thm]{Definition}
\newtheorem{question}[thm]{Question}
\theoremstyle{remark}
\newtheorem{ex}[thm]{Example}
\newtheorem{non-ex}[thm]{Counterexample}
\newtheorem*{remark}{Remark}
\theoremstyle{plain}
\newcommand{\llp}{\boxslash}
\newcommand{\LL}{\mathcal{L}}
\newcommand{\RR}{\mathcal{R}}
\newcommand{\C}{\mathcal{C}}
\newcommand{\D}{\mathcal{D}}
\newcommand{\CC}{\mathbb{C}}
\newcommand{\DD}{\mathbb{D}}
\newcommand{\initial}{\varnothing}
\newcommand{\terminal}{{*}}
\renewcommand{\P}{\mathscr{P}}
\newcommand{\W}{\mathcal{W}}
\DeclareMathOperator{\Hom}{Hom}
\DeclareMathOperator{\ob}{ob}
\DeclareMathOperator{\iso}{iso}
\DeclareMathOperator{\Ho}{Ho}
\DeclareMathOperator*{\colim}{colim}
\newcommand{\rto}{\to}
\newcommand{\fib}{\ar@{->>}}
\newcommand{\cofib}{\ar@{ (->}} 
\newcommand{\cofibfib}{\ar@{ (->>}} 
\newcommand{\acycfib}{\stackrel{\sim}{\twoheadrightarrow}}
\renewcommand{\tilde}{\widetilde}
\renewcommand{\emph}{\textsl}
\date{\today}
\begin{document}

\title[Extending model structures]{Extending to a model structure is not a first-order property}
\author[Droz Zakharevich]{Jean-Marie Droz \and Inna Zakharevich}
\begin{abstract}
  Let $\C$ be a finitely bicomplete category and $\W$ a subcategory.  We prove
  that the existence of a model structure on $\C$ with $\W$ as the subcategory
  of weak equivalence is not first order expressible.  Along the way we
  characterize all model structures where $\C$ is a partial order and show that
  these are determined by the homotopy categories.
\end{abstract}

\maketitle


\section*{Introduction}

What is a ``homotopy theory''?  Colloquially, it is a context in which one
classifies objects up to ``weak equivalence'' instead of up to isomorphism:
chain complexes up to quasiisomorphism \cite[Section 4 Remarks]{quillen67},
topological spaces up to homotopy equivalence \cite{strom} or weak equivalence
\cite[Theorem II.3.1]{quillen67}, categories up to functors which are homotopy
equivalences on geometric realization \cite{thomason}, etc.  These can be
modeled and formalized in many different ways.  However, if we wish to show that
homotopy theories are equivalent (possibly up to ``homotopy'', inside a
``homotopy theory of homotopy theories'') then we often use Quillen's model
categories \cite{quillen67} to prove this.  A model category has three
distinguished classes of morphisms---weak equivalences, cofibrations and
fibrations---of which only the weak equivalences characterize the homotopy
theory.  The cofibrations and fibrations are there purely to assist in
calculations and constructions.  Due to the presense of this extra structure,
model categories are rigid and have many computational and formal methods for
working with them.  However---and also due to the presense of extra
structure---they generally do not appear fully formed: they often arise in
situations where the weak equivalences are known, but the choice of cofibrations
and fibrations is not.

More concretely, we know that model categories naturally produce models of
homotopy theories, such as quasi-categories \cite{joyal, lurie}, simplicially
enriched categories \cite{quillen67, dwyerkan1, dwyerkan2} and complete Segal
spaces \cite{bergner07}.  However, there is no known way of identifying which
quasi-categories (for instance) arise from model structures.\footnote{For
  \textsl{combinatorial} model categories there is: if the $\infty$-category
  represented by the pair is locally presentable. \cite[Proposition
  A.3.7.6]{lurie}} Thus model categories live
in a strange gray area of homotopy theory: we know that all models of the
homotopy theory of homotopy theories form model categories, and we know that
they are all equivalent as model categories.  However, we do not know which
parts of the homotopy theory of homotopy theories can be explored purely using
the theory of model categories.

One easy place to start the comparison would be with Barwick--Kan's model of
relative categories \cite{barwickkan, DHKS}.  A relative category is simply a
pair $(\C,\W)$ of a category and a subcategory of weak equivalences.  It is
known that the category of relative categories is a model category, which is
Quillen equivalent to the other models of the homotopy theory of homotopy
theories.  To try to identify which homotopy theories arise from model
categories is to answer the question of when a pair $(\C,\W)$ of a category and
a subcategory of weak equivalences extends to a model structure.  In a few
cases, specialized techniques can be used to construct model structures, for
example cofibrant generation \cite[Section 2.1]{hovey99}, Bousfield localization
\cite[Chapter 4]{hirschhorn},  Cisinski's minimal model structures
\cite{cisinski06} or one-dimensional model structures studied in
\cite{rosickytholen, balchingarner}.  However, there is no practically useful necessary and
sufficient criterion for determining whether it is possible to complete a pair
$(\C,\W)$ to a model structure.

In this paper, we show that in a well defined sense such a criterion does not
exist, and therefore that there is no simple way to characterize which homotopy
theories can be accessed by model categories.  More concretely, we show that
there is no set of first order formulae (i.e. formulae quantifying only over
elements but not over sets or functions) which can identify those relative
categories that can extend to model categories.  Note that if we allow
quantifying over sets (or proper classes), we can identify model structures by
simply stating that there exist sets of cofibrations and fibrations that satisfy
the model category axioms.  One could hope, however, that there exist simpler
formulas, which only quantified over objects or morphisms in categories, that
could identify which relative categories arise from model categories.  This is
unfortunately not the case.

\begin{maintheorem} \label{thm:second-order}
  In the language of categories with a designated subcategory of weak
  equivalence there is no first order characterization of those that extend to
  model structures.
\end{maintheorem}

To prove this theorem we construct two different pairs $(\C,\W)$ and $(\C',\W')$
that satisfy all of the same first-order statements but such that $(\C,\W)$ does
\emph{not} extend to a model structure while $(\C',\W')$ does.  To accomplish
this we produce a complete characterization of all model categories whose
underlying categories are \emph{posets}: those skeletal categories for which
$|\Hom(A,B)|\leq 1$ for all $A$ and $B$.  We then show that a simpler
characterization exists when the underlying category is countable, and use this
to produce the desired pairs.

The characterization of model categories on posets is the following.

\begin{maintheorem} \label{thm:modelexistsall} Let $\C$ be a preorder closed
  under finite limits and colimits,
  and let $\W$ be a subcategory of $\C$.  A model structure exists on $\C$ with
  weak equivalences $\W$ if and only if the following conditions hold.
  \begin{itemize}
  \item[(a)] For any two composable morphisms $f$ and $g$ in $\C$, if $gf$ is in $\W$
    then $f$ and $g$ are in $\W$.
  \item[(b)] There exists a functor $\chi: \C \rto \C$ such that $\chi(\W) \subseteq
    \iso \C$, and for every object $A \in \C$, the diagram
    \[\xymatrix{ A\times \chi(A) \ar[r] \ar[d] & \chi(A) \ar[d] \\ A \ar[r] & A
      \cup \chi(A)}\]
    lies in $\W$.
  \end{itemize}
\end{maintheorem}

When $\C$ is countable we prove a stronger statement:
\begin{maintheorem} \label{thm:modelexistscount}
  If $\W$ has only a countable number of connected components then there is a
  first-order characterization of when $(\C,\W)$ extends to a model structure.
\end{maintheorem}
Thus when $\C$ is countable the existence of a model structure extending
$(\C,\W)$ is a first-order condition. Therefore in order to construct the
desired counterexample it suffices to construct a pair $(\C,\W)$ that satisfies
the first order characterization from Theorem~\ref{thm:modelexistscount} (but
not the condition of countability!) and does not satisfy the conditions of
Theorem~\ref{thm:modelexistsall}. By the L\"owenheim--Skolem \cite{maltsev}
Theorem there exists a countable model $(\C',\W')$ which satisfies all of the
same first-order statements that $(\C,\W)$ does.  By
Theorem~\ref{thm:modelexistscount} the pair $(\C',\W')$ extends to a model
structure while $(\C,\W)$ does not.

As an interesting aside, we also prove in Theorem~\ref{thm:C->HoC} that any two
model structures on a poset that have the same weak equivalences are Quillen
equivalent.  The proof of this theorem also allows us to construct examples of
model categories which are not cofibrantly generated: see
Corollary~\ref{cor:non-cg}.  Moreover, we show that when the poset and weak
equivalences are especially nice the zigzag of equivalences can be taken to
consist only of the identity functor on the underlying category; see
Theorem~\ref{thm:modelsequiv}.  It would be highly desirable to be able to prove
that such zigzags exist in general, and the existence of notable special cases
(see for example \cite[Theorem 5.7]{dugger01} or \cite[Theorem 7.5]{bergner07})
shows that it ought to be possible.

This paper is organized as follows.  Section~\ref{sec:tech} contains technical
preliminaries on lifting systems, model structures, and the particular ways they
behave in posets.  Section~\ref{sec:centers}
introduces the notion of a center and explores the interactions of centers and
model structures.
Section~\ref{sec:construction} constructs a model structure given a choice of
centers and proves Theorem~\ref{thm:modelexistsall}.
Section~\ref{sec:countable} provides an alternate characterization of the
existence of model structures on countable posets and proves
Theorem~\ref{thm:second-order}.  Lastly, Section~\ref{sec:comp} compares
different model structures extending a given pair and shows that in many cases
all such model structures are equivalent.

\subsection*{Notation}

All categories are assumed to be skeletal, in the sense that if $A \rto B$ is an
isomorphism in $\C$ then $A = B$.  As equivalence of categories preserves model
structures and all categories are equivalent to a skeletal category, this does
not lose any generality for our results.  A \emph{poset} is a skeletal
category $\C$ such that for all objects $A$ and $B$, $\#\Hom_\C(A,B) \leq 1$.
When $\C$ is small then it uniquely defines a poset in the classical sense, with
underlying set $\ob\C$ and relation $A \geq B$ if $\#\Hom_\C(A,B) = 1$.
Conversely, given a classical poset $P$ we can define a category $\C$ with
$\ob\C = P$ and $\Hom_\C(A,B) = \{*\}$ if $A \geq B$ and $\emptyset$ otherwise.
Thus our notion of a poset corresponds exactly to the classical notion of a
poset except that we allow the class of objects to be a proper class, not simply
a set.\footnote{To be completely consistent we may want to use the word
  ``poclass'' instead of ``poset'' to emphasize this fact, but as ``poclass'' is
  a much more nonstandard term we avoid its usage.}

In a poset, for any diagram
\[B \longleftarrow A \longrightarrow C\]
the pushout $B\cup_A C$ is equal to $B\cup C$.  For concision we
write both of these as $B \cup C$. Dually, we write $Y\times Z$ for
$Y\times_X Z$.

A category $\C$ is \emph{finitely bicomplete} if it contains all finite limits
and colimits; it is \emph{bicomplete} if it contains all small limits and
colimits.

\subsection*{Acknowledgements}

The authors would like to thank Jonathan Campbell and Wesley Calvert for their
thoughts on the paper, as well as the anonymous referee whose comments on the
exposition (including the definitions of ``semi-(co)fibrant'' and $\W^\chi_f$)
greatly improved the paper.  Zakharevich was supported in part by NSF grant
DMS-1654522.

\section{Lifting systems, model structures, and posets} \label{sec:tech}

We begin by recalling the definition of maximal lifting system and weak
factorization system.  For more background on these, especially in relation to
model categories, see for example \cite[Chapter 14]{mayponto} or \cite[Section
11]{riehl14}.

\begin{definition}
  For any two morphisms $f:A \rto B$ and $g:X \rto Y$ in $\C$ we say that
  \emph{$f$ lifts on the left of $g$} or \emph{$g$ lifts on the right of
    $f$} if for all commutative squares
  \[\xymatrix{A \ar[r] \ar[d]_f & X \ar[d]^g \\ B \ar[r] & Y}\]
  there exists a morphism $h:B \rto X$ which makes the diagram commute.  If $f$
  lifts on the left of $g$ we write $f\llp g$.

  For any class $S$ of morphisms of $\C$, we write 
  \begin{align*}
    S^\llp &= \{g\in \C\,|\, f\llp g\hbox{ for all }f\in S\}, \hbox{ and} \\
    {}^\llp S &= \{f\in \C\,|\, f\llp g\hbox{ for all }g\in S\}. 
  \end{align*}
  Note that both $S^\llp$ and $^\llp S$ can be proper classes.
\end{definition}

\begin{definition} \label{def:mls}
  A \emph{maximal lifting system} (henceforth written MLS) in $\C$ is a pair
  of classes of morphisms $(\LL, \RR)$ satisfying the following three
  conditions:
  \begin{enumerate}
  \item $\LL \llp \RR$.
  \item ${}^\llp\RR \subseteq \LL$.
  \item $\LL^\llp \subseteq \RR$.
  \end{enumerate}
  A \emph{weak factorization system} (henceforth written WFS) is a MLS such that
  every morphism $f$ in $\C$ can be factored as $f_Rf_L$ with $f_R\in \RR$ and
  $f_L\in \LL$.
\end{definition}

\begin{lemma} \label{lem:liftclass} Let $J$ be any class of morphisms in $\C$.
  Then $J^\llp$ is closed under composition, pullbacks in $\C$ and arbitrary
  products.  Dually, $^\llp J$ is closed under composition, pushouts in $\C$ and
  arbitrary coproducts.
\end{lemma}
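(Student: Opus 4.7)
The plan is to unwind the definition of $J^\llp$ directly in each case, using the universal properties of pullbacks and products to produce and combine lifts. The dual statement for ${}^\llp J$ will then follow by reversing arrows, since the relation $f\llp g$ is manifestly self-dual under passage to $\C^{\mathrm{op}}$ while pullbacks and products exchange roles with pushouts and coproducts.

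For pullback closure, let $g: X \rto Y$ lie in $J^\llp$ and suppose $g': P \rto W$ is a pullback of $g$ along some $W \rto Y$, so $P = X \times_Y W$ with projections $p_X : P \rto X$ and $p_W = g' : P \rto W$. Given $f: A \rto B$ in $J$ together with a commutative square from $f$ to $g'$, I would postcompose the top edge with $p_X$ to obtain a commutative square from $f$ to $g$. Since $g\in J^\llp$, this square admits a lift $h: B \rto X$. Pairing $h$ with the bottom edge $B \rto W$ of the original square yields a cone over the defining cospan of $P$, and the universal property of the pullback furnishes a unique map $\tilde h : B \rto P$. The two commutativity conditions needed for $\tilde h$ to be a lift against $g'$ reduce, upon postcomposing with $p_X$ and $p_W$ respectively, to the commutativities already established; this is the routine check.

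For product closure, let $\{g_i : X_i \rto Y_i\}_{i \in I}$ be a family in $J^\llp$ and set $g = \prod_i g_i : \prod_i X_i \rto \prod_i Y_i$. Given $f \in J$ and a commutative square from $f$ to $g$, postcomposition with the $i$-th projection produces a square from $f$ to $g_i$, which admits a lift $h_i : B \rto X_i$. Assembling the $h_i$ via the universal property of the product gives a single $h : B \rto \prod_i X_i$; the two required commutativities can be verified factor by factor and reduce to those already known for each $h_i$.

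The dual assertions about ${}^\llp J$ are obtained by applying the above to $\C^{\mathrm{op}}$. There is really no obstacle here beyond bookkeeping: the only care required is to keep straight which edges of the lifting square one is projecting or injecting along, so that the assembled map lands in the pullback (resp.\ product) rather than some other apex.
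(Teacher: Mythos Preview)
Your argument is correct and is the standard proof of this classical fact. The paper does not give its own proof at all: it simply cites \cite[14.1.8]{mayponto}. So there is nothing to compare beyond noting that what you have written is essentially the argument one finds in that reference.
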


For a proof, see for example \cite[14.1.8]{mayponto}.

We now recall the definition of a model category, using the WFS definition (as
presented in, for example, \cite{mayponto} and \cite{riehl14}).  

\begin{definition}
  \label{shorterAxioms}
  A \emph{model structure} $\CC$ on a finitely bicomplete category $\C$ is the
  specification of three subcategories of $\C$ called the \emph{weak
    equivalences} ($\CC_{we}$), the \emph{cofibrations} ($\CC_{cof}$) and the
  \emph{fibrations} ($\CC_{fib}$). Those three subcategories should respect the
  following axioms.
  \begin{description}
  \item[WFS]  The pairs 
    \[(\CC_{cof},\CC_{fib}\cap \CC_{we}) \qquad (\CC_{cof}\cap
    \CC_{we},\CC_{fib})\] are WFSs.
  \item[2OF3] For morphisms $f$ and $g$, if two of the
    morphisms $f$, $g$ and $gf$ are weak equivalences, then so is the third.
  \end{description}
  We call a morphism which is both a cofibration (resp. fibration) and a weak
  equivalence an \emph{acyclic cofibration} (resp. \emph{acyclic fibration}).
  An object $A$ such that the morphism $\initial \rto A$ is a cofibration
  (resp. fibration) is called \emph{cofibrant} (resp. \emph{fibrant}.  An object
  which is both cofibrant and fibrant is called \emph{bifibrant}. We call any
  connected component of $\CC_{we}$ a \emph{weak equivalence class}.
\end{definition}

\begin{remark}
  The definition above is an equivalent restatement of Quillen's original
  definition of a closed model category.  In more modern treatments it is
  customary to assume that $\C$ is bicomplete, not finitely bicomplete, as the
  construction of factorizations generally requires small limits and colimits,
  not just finite ones.  In Section~\ref{sec:comp} we will need this assumption
  to compare model structures.  However, for the main theorem in this paper this
  assumption is counterproductive, since the existence of small limits and
  colimits is not a first-order assumption.  However, the existence of
  \emph{finite} limits and colimits is, since it only requires the existence of
  an initial object, a terminal object, binary (co)products and (co)equalizers.
\end{remark}

From this point onwards, $\C$ is a finitely bicomplete poset.  We begin with a
lemma which is used repeatedly to prove lifting properties.

\begin{lemma} \label{lem:see-lift} Let $J$ be a class of morphisms in $\C$,
  closed under pushouts along morphisms in $\C$.  Then $J \llp f$ if and only if
  for all factorizations of $f:A \rto B$ as $A \stackrel{f'}\rto C \rto B$, if
  $f'\in J$ then $f' = 1_A$.
\end{lemma}

\begin{proof}
  First, suppose that $J\llp f$ and consider any factorization of $f$ as
  $A \stackrel{f'}\rto C \rto B$ where $f'\in J$.  We then have a diagram
  \[\xymatrix{A \ar[r]^= \ar[d]_{f'} & A \ar[d] \\ C \ar[r] & B}\]
  which must have a lift; thus $A = C$.

  Conversely, suppose that the condition in the lemma holds, and consider any
  diagram
  \[\xymatrix{X \ar[r] \ar[d]_g & A \ar[d]^f \\ Y \ar[r] & B}\]
  with $g\in J$.  As $J$ is closed under pushouts, the morphism
  $g':A \rto A\cup Y$ is also in $J$.  Since $f$ factors through $g'$ and
  $g'\in J$ we must have $A\cup Y = A$.  Thus the morphism $Y \rto A\cup Y = A$
  is a lift in the diagram, and $J\llp f$.
\end{proof}

We now turn to a uniqueness lemma.

\begin{lemma} \label{uniqueness} In $\CC$, factorizations into an acyclic
  cofibration and a fibration or a cofibration and an acyclic fibration are
  unique.
  
  Each weak equivalence class has a unique fibrant and cofibrant object.  In
  addition, in each weak equivalence class all elements in the class are at zigzag
  distance at most two from this object.  The zigzags can be chosen to consist
  of an inverse acyclic fibration and an acyclic cofibration.
\end{lemma}
\begin{proof}
  Let $f:A \rto B$ be any morphism, and consider two factorization of $f$:
  \[\xymatrix{ A \ar@{^{ (}->}[r] \ar@{^{ (}->}[d] & B' \ar@{->>}[d]^\sim \\ B''
      \ar@{->>}[r]^\sim & B}.\]
  Since cofibrations lift against acyclic fibrations, there exist morphisms $B'
  \rto B$ and $B \rto B'$.  Since $\C$ is a poset these must both be identities,
  and the factorization is unique.  The statement for factorizations into an
  acyclic cofibration followed by a fibration follows analogously.

  Suppose that $X$ and $Y$ are two bifibrant objects which are in the same weak
  equivalence class.  Since they are isomorphic in $\Ho \CC$, there exist
  morphisms $X \rto Y$ and $Y \rto X$ in $\C$; since $\C$ is a poset these must
  be identities, and $X = Y$.  Thus each weak equivalence class contains a
  unique bifibrant object.

  Now suppose that $A$ is any object.  Then there is a diagram
  \[\xymatrix{A \ar@{^{ (}->}[d]_\sim& A^c \ar@{->>}[l]_\sim \ar@{^{
          (}->}[d]_\sim \\ A^f
       & A^{cf} \ar@{->>}[l]_\sim}\]
  where $A^c$ is a cofibrant replacement for $A$, $A^f$ is a fibrant replacement
  for $A$, and $A^{cf}$ is both a cofibrant replacement for $A^f$ and a fibrant
  replacement for $A^c$ (which will end up being equal because there is a unique
  bifibrant object in the weak equivalence class of $A$).  This constructs the
  length-two zigzags.  
\end{proof}

The following condition on a subclass of morphisms is a strengthening of the
usual 2-of-3 property for weak equivalences.

\begin{definition}
  We say that a class $\mathcal{E}$ of morphisms in $\C$ is \emph{decomposable}
  if for any morphism $f\in \mathcal{E}$, if $f = gh$ for some morphisms $g$ and
  $h$, then both $g$ and $h$ are in $\mathcal{E}$.
\end{definition}

\begin{proposition} \label{s2of3} $\CC_{we}$ is decomposable.
\end{proposition}

\begin{proof}
  Fix $f:A \stackrel{\sim}{\rto} B$ in $\CC_{we}$.  Write $f = hg$.  Factor $g$
  as a cofibration followed by an acyclic fibration, and factor $f$ as an
  acyclic cofibration followed by an acyclic fibration, as illustrated in the
  following diagram:
  \[\xymatrix{A \cofib[rr]^{f_{ac}}_\sim \cofib[d]_{g_c} &&  A' \fib[d]^{f_{af}}_\sim \\
    C' \fib[r]^{g_{af}}_\sim & C \ar[r]^h & B}\]
  Then this diagram has a lift $\alpha: C' \rto A'$.  As $\C$ is a poset,
  $\alpha$ is the pushout of $f_{ac}$ along $g_c$, so it must also be an acyclic
  cofibration.  By (2OF3) $g_c$ is also a weak equivalence.  Thus
  $g$ is also a weak equivalence, and by (2OF3) $h$ is as well.
\end{proof}

We mention an important example of a particular type of weak equivalence class.

\begin{ex} \label{ex:butterfly}
  Suppose that $\CC$ contains a seven-object weak equivalence class with the
  following diagram (and no other morphisms between these seven objects):
  \[\xymatrix{& U \ar[ld] \ar[rd] & & U' \ar[ld] \ar[rd] \\ 
    E \ar[rd]  & & C \ar[ld] \ar[rd]  & & E' \ar[ld] \\ 
    & D & & D'
  }\]
  Then the model structure must assign the morphisms as follows:
  \[\xymatrix{& U \fib[ld] \cofib[rd] & & U' \cofib[ld] \fib[rd] \\ 
    E \cofib[rd]  & & C \fib[ld] \fib[rd]  & & E' \cofib[ld] \\ 
    & D & & D'
  }\]
  $C$ must be the cofibrant fibrant object, as it is the only object with zigzag
  distance $2$ from all other objects in the weak equivalence class.  $U$ and
  $U'$ must be cofibrant, as they receive no weak equivalences; dually, $D$ and
  $D'$ must be fibrant.  The morphisms $U \rto C$ and $C \rto D$ are
  cofibrations and fibrations, respectively, as $U,U'$ cannot be fibrant and
  $D,D'$ cannot be cofibrant. By Proposition \ref{s2of3}, the morphism $U
  \rto E$ is the pullback of the morphism $C \rto D$ along $E \rto D$, so it is
  also a fibration; dually, the morphism $E \rto D$ is the pushout of $U \rto C$
  and must be a cofibration.
\end{ex}

Bifibrant objects are vitally important to model structures, as they are ``good
choices'' for both mapping into and mapping out of.  In a poset the choice of
bifibrant objects is uniquely functorial, and thus these give a ``good'' retract
of the category.

\begin{proposition} \label{prop:bifibfunc}
  The map $C \mapsto C^{cf}$ extends to a functor $\C \rto \C$.
\end{proposition}

This observation is the key to the definition of a \textsl{center}, given in the
following section.  We finish up this section with two technical observations which
motivate the definition of the model structure in Section~\ref{sec:construction}.

\begin{proposition} \label{prop:inCofib} If $B$ is any cofibrant object in $\CC$
  and $f:A \rto B$ is any morphism in $\C$, then $f$ is a cofibration in $\CC$.
  Dually, if $A$ is fibrant then $f$ is a fibration.
\end{proposition}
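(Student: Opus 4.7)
My plan is to verify the cofibration lifting property directly, using that $\C$ is a skeletal poset so that any two parallel morphisms coincide. By the MLS axiom ${}^\llp(\CC_{fib}\cap \CC_{we}) \subseteq \CC_{cof}$, it suffices to show $f \llp p$ for every acyclic fibration $p:X\rto Y$; and once I produce any morphism $B \rto X$ at all, both commutativity triangles for the candidate lift hold for free because $\C$ is a poset.

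So the key step is the following: given a commutative square with $f$ on the left and an acyclic fibration $p$ on the right, I construct the candidate lift $B \rto X$ by invoking cofibrancy of $B$. The morphism $\initial \rto B$ is a cofibration, and $p$ is an acyclic fibration, so the auxiliary square
\[\xymatrix{\initial \ar[r] \ar[d] & X \ar[d]^p \\ B \ar[r] & Y}\]
(whose top is the unique map out of $\initial$ and whose bottom is the bottom map of the original square) admits a lift $B \rto X$. By the poset observation, this same morphism is a valid lift in the original square with $f$ and $p$, so $f \in \CC_{cof}$.

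The dual statement follows by the analogous argument: given an arbitrary acyclic cofibration $j:X\rto Y$ and a square with $j$ on the left and $f$ on the right, fibrancy of $A$ means $A \rto *$ is a fibration, so the auxiliary square with $j$ on the left and $A \rto *$ on the right admits a lift $Y \rto A$, which serves as the needed lift. I don't anticipate any real obstacle; the entire proof rests on the trivial observation that commutativity in a poset is automatic, so that the lifting property reduces to the bare existence of a morphism $B \rto X$ (respectively $Y \rto A$).
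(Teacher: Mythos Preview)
Your proof is correct, and the core idea is the same as the paper's: produce a lift in the square with $\initial \hookrightarrow B$ on the left and an acyclic fibration on the right, then exploit that $\C$ is a poset. The paper packages this slightly differently: instead of verifying $f \llp p$ for an arbitrary acyclic fibration $p$, it first factors $f$ as $A \hookrightarrow A' \stackrel{\sim}{\twoheadrightarrow} B$ and then runs your lifting argument once, against the specific acyclic fibration $A' \to B$, to obtain a morphism $B \to A'$; since there is already a morphism $A' \to B$ and $\C$ is skeletal, this forces $A' = B$ and hence $f$ equals the cofibration $A \hookrightarrow A'$. Your version bypasses the factorization axiom entirely and checks the lifting property directly, which is marginally more elementary; the paper's version has the small aesthetic advantage of identifying $f$ with a known cofibration rather than re-proving membership in ${}^\llp(\CC_{fib}\cap\CC_{we})$.
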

\begin{proof}
  Factor $f$ into a cofibration followed by an acyclic fibration and consider
  the following diagram:
  \[\xymatrix{\initial \ar[r] \cofib[d] & A \cofib[r] \ar[rd]_f & A' \fib[d]^\sim \\
    B \ar[rr]^= & & B}\]
  By (WFS) this has a lift $B \rto A'$.  As $\C$ is a poset we conclude that $B
  = A'$, so $f$ is equal to the cofibration $A \hookrightarrow A'$.  The second
  part follows by duality.
\end{proof}
\begin{corollary}
  \label{comparisonToCenter}
  If $C = C^{cf}$ and $f:U \rto C$ is in $\CC_{we}$ then $f$ is an acyclic
  cofibration. Dually, if $g:C \rto D$ is in $\CC_{we}$ then $g$ is an acyclic
  fibration.
\end{corollary}

\section{Centers} \label{sec:centers}

We now turn to encoding properties of bifibrant objects in a more direct manner.
Inspired by Proposition~\ref{prop:bifibfunc} we define a ``center'' of a weak
equivalence class to be given by a choice of retraction which is compatible with
weak equivalences.  Such a retraction will encode all of the relevant properties
of bifibrant objects and will allow us to construct a model structure.  For the
rest of this section, fix a finitely bicomplete poset $\C$ and a subcategory
$\W$ that is decomposable.  We denote morphisms in $\W$ by
$\stackrel{\sim}{\rto}$.

\begin{definition}
  A \emph{choice of centers} is a functor
  \[\chi: \C \rto \C\]
  such that the following properties hold:
  \begin{description}
  \item[C1] The image of $\chi|_\W$ only contains identity morphisms.
  \item[C2] For all $A\in \C$ the diagram
    \[\xymatrix{A\times \chi(A) \ar[d] \ar[r] & \chi(A) \ar[d] \\ A \ar[r] &
      A\cup \chi(A)}\]
    lies in $\W$.
  \end{description}
\end{definition}

Condition (C1) implies that if $f:A \rto B$ is in $\W$ then
$\chi(f) = 1_{\chi(A)}$.  In particular, if there exists a zigzag of morphisms
in $\W$ connecting $A$ and $B$ then $\chi(A) = \chi(B)$.  In particular, $\chi$
must be idempotent: $\chi(\chi(A)) = \chi(A)$.

We can now make our claim that centers are akin to bifibrant objects precise by
showing that any model structure produces a choice of centers by taking any
objects to its bifibrant approximation.

\begin{lemma} \label{lem:model->centers}
  Every model structure on $\C$ gives a choice of centers.
\end{lemma}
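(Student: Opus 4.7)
The natural candidate is $\chi(A) := C_A$, the unique cofibrant-fibrant object in the weak equivalence class of $A$, whose existence is guaranteed by Lemma~\ref{uniqueness}. Since $\C$ is a poset, $\chi$ is automatically a functor as soon as one verifies that whenever there is a morphism $A \rto B$ in $\C$, there is also a morphism $C_A \rto C_B$.

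To produce such a morphism, first factor $\initial \rto A$ as $\initial \hookrightarrow QA \acycfib A$, and then factor $QA \rto *$ as $QA \acyccofib RQA \twoheadrightarrow *$. Then $RQA$ is both cofibrant and fibrant, so Lemma~\ref{uniqueness} identifies $RQA$ with $C_A$ (and similarly $RQB$ with $C_B$). Lifting the cofibration $\initial \hookrightarrow QA$ against the acyclic fibration $QB \acycfib B$ yields $QA \rto QB$, and a further lift of the acyclic cofibration $QA \acyccofib C_A$ against the fibration $C_B \rto *$ produces the required $C_A \rto C_B$. Condition (1) is then immediate: if $f \in \W$ then $A$ and $B$ lie in a common weak equivalence class, so $C_A = C_B$ in the skeletal category, and the unique self-morphism of $C_A$ is the identity.

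For condition (2), I plan to exploit both the lower replacement $X := QA$ (satisfying $X \acycfib A$ and $X \acyccofib C_A$) and its dual: factoring $A \rto *$ as $A \acyccofib RA \twoheadrightarrow *$ and $\initial \rto RA$ as $\initial \hookrightarrow QRA \acycfib RA$ produces a cofibrant-fibrant $QRA$ that must again equal $C_A$, giving $A \acyccofib Y$ and $C_A \acycfib Y$ with $Y := RA$. In the poset, $X \leq A$ and $X \leq C_A$ force $X \leq A \times C_A$; the composite $X \rto A \times C_A \rto A$ agrees with the acyclic fibration $X \acycfib A \in \W$, so Proposition~\ref{s2of3} gives $A \times C_A \rto A \in \W$, and the analogous argument with codomain $C_A$ handles $A \times C_A \rto C_A$. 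Dually, $A, C_A \leq Y$ gives $A \cup C_A \leq Y$, and (S2OF3) applied to $A \rto A \cup C_A \rto Y$ and $C_A \rto A \cup C_A \rto Y$ shows the two bottom edges of the square are also in $\W$.

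The main point to get right is the symmetry between the lower and upper replacements of $A$: both detect the same center $C_A$ by uniqueness of the cofibrant-fibrant object in each weak equivalence class, and once this is in hand every edge of the center square is forced into $\W$ by a single application of (S2OF3) to a known acyclic (co)fibration.
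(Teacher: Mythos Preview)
Your proof is correct and follows the same approach as the paper: take $\chi(A)$ to be the unique cofibrant-fibrant object in the weak equivalence class of $A$, exhibit a square of weak equivalences relating $A$, $\chi(A)$, and the cofibrant and fibrant replacements, then apply (S2OF3) to pass to the product/coproduct square. You supply more detail than the paper does---in particular you verify functoriality explicitly and construct the morphisms $QA \to \chi(A)$ and $\chi(A) \to RA$ by hand, whereas the paper simply asserts the existence of the replacement square, relying implicitly on Lemma~\ref{uniqueness}.
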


\begin{proof}
  Let $\CC$ be any model structure on $\C$.  We define $\chi(A) = A^{cf}$, the
  bifibrant object in the same weak equivalence class as $A$; this is unique by
  Lemma~\ref{uniqueness} so $\chi$ is well-defined and satisfies the first
  condition for a choice of centers.  To check the second one, let $A^c$ be a
  cofibrant replacement of $A$ and $A^f$ be a fibrant replacement of $A$; then
  we have a diagram
  \[\xymatrix{A^c \cofib[r]^-\sim \fib[d]_\sim & \chi(A) \fib[d]^\sim \\ A
    \cofib[r]^-\sim & A^f}\]
  in $\CC_{we}$.  By Proposition~\ref{s2of3}, $\CC_{we}$ is decomposable, so the
  square 
  \[\xymatrix{A\times \chi(A) \cofib[r]^-\sim \fib[d]_\sim & \chi(A) \fib[d]^\sim \\ A
    \cofib[r]^-\sim & A\cup \chi(A)}\]
  must also be in $\CC_{we}$.
\end{proof}

Even though $\chi$ is uniquely determined by $\CC$, the model structure $\CC$ is
not uniquely determined by $\chi$.

\begin{ex}
  The following two model structures have the same choice of centers.  All
  cofibrant objects (other than $\emptyset$) are marked with $\cdot^c$ and all
  fibrant objects (other than $*$) are marked with $\cdot^f$.
  \[\xymatrix@C=1.7em@R=1.7em{& & B^c \cofib[rd]^\sim \\
    \emptyset \cofib[r] & A^c \cofib[ru]^\sim \cofib[rd]_\sim \cofib[rr]^\sim& & C^{cf} \fib[r] & {*} \\
    & & {B'}^c \cofib[ru]_\sim } 
  \qquad
  \xymatrix@C=1.7em@R=1.7em{& & B \cofib[rd]^\sim \\
    \emptyset \cofib[r] & A^c \fib[ru]^\sim \fib[rd]_\sim \cofib[rr]^\sim & & C^{cf} \fib[r] & {*} \\
    & & B' \cofib[ru]_\sim }\]
\end{ex}

Just as bifibrant objects record the homotopical information in a model
structure, the choice of centers records homotopical information in a poset.  In
particular, choices of centers identify the weak equivalences.

\begin{lemma} \label{lem:all-ob} Any morphism $f:A \rightarrow B$ in $\C$ such
  that $\chi(A) = \chi(B)$ is in $\W$.
\end{lemma}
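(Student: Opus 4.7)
The plan is to factor $f$ through the join $B \cup \chi(B) = B \cup \chi(A)$ (these agree because $\chi(A) = \chi(B)$ by hypothesis) and to harvest weak equivalences from the defining diagrams of $\chi$ at $A$ and at $B$, applying (S2OF3) repeatedly. The overall idea is that (S2OF3) lets us cancel a known weak equivalence out of a composition from either side, so it will suffice to exhibit $A \to B \cup \chi(B)$ as a weak equivalence.

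First I would record the weak equivalences supplied by the definition of a choice of centers. Applied at $A$, the square gives $A \to A \cup \chi(A) \in \W$ and $\chi(A) \to A \cup \chi(A) \in \W$; applied at $B$, it gives $B \to B \cup \chi(B) \in \W$ and $\chi(B) \to B \cup \chi(B) \in \W$. Using $\chi(A) = \chi(B)$, I would then consider the triangle in $\C$ with vertices $\chi(A)$, $A \cup \chi(A)$, $B \cup \chi(A)$, in which both edges out of $\chi(A)$ coincide with morphisms listed above and are therefore in $\W$. Applying (S2OF3) to the composite $\chi(A) \to A \cup \chi(A) \to B \cup \chi(A)$ yields that the edge
\[A \cup \chi(A) \to B \cup \chi(A)\]
lies in $\W$.

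Now I would compose with the weak equivalence $A \to A \cup \chi(A)$ to obtain $A \to B \cup \chi(A) \in \W$, using that $\W$ is a subcategory and hence closed under composition. Since $\C$ is a poset, this morphism factors through $f$ as
\[A \xrightarrow{f} B \to B \cup \chi(B),\]
with the second arrow in $\W$. A final application of (S2OF3) (the composite is in $\W$, so both factors are) gives $f \in \W$.

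There is no real obstacle here; the one thing to be careful about is bookkeeping, namely tracking in which direction each edge of the center square is traversed and making sure every invocation of (S2OF3) has its hypothesis (the full composite lying in $\W$) actually verified. The whole argument is a short chase using only the two axioms of a choice of centers together with strong 2-of-3 and the fact that $\W$ is a subcategory.
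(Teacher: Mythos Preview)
Your argument is correct and follows the same strategy as the paper: produce a weak equivalence that factors through $f$ and then invoke (S2OF3). The only difference is cosmetic---the paper uses the product side of the center square at $A$ and the coproduct side at $B$, writing the weak equivalence $A\times C \to C \to B\cup C$ (with $C=\chi(A)=\chi(B)$) and factoring it as $A\times C \to A \xrightarrow{f} B \to B\cup C$, which saves the intermediate step of showing $A\cup\chi(A)\to B\cup\chi(A)\in\W$; your version works entirely on the coproduct side and needs that one extra application of (S2OF3).
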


In particular, this implies that in a model structure, any morphism between two
objects in the same weak equivalence class is itself a weak equivalence.

\begin{proof}
  Let $C = \chi(A) = \chi(B)$.  By (C2) the morphisms $A \times C \rto C$ and
  $C \rto B\cup C$ are both in $\W$.  Thus $A \times C \rto C \rto B\cup C$ is
  in $\W$.  But we can also factor this morphism as
  \[A \times C \longrightarrow A \stackrel{f}{\longrightarrow} B \longrightarrow B \cup C,\]
  so, since $\W$ is decomposable, $f$ is a weak equivalence.
\end{proof}

It is also the case that choices of centers are all closely related.

\begin{lemma} \label{lem:center-product}
  If $\chi_1$ and $\chi_2$ are choices of centers then $\chi_1\times
  \chi_2$ is also a choice of centers.  Dually, $\chi_1\cup \chi_2$ is also a
  choice of centers.
\end{lemma}

\begin{proof}
  We prove the first part; the second follows by duality.

  Since $\C$ is closed under products, $\chi_1\times \chi_2$ is clearly a
  well-defined functor $\C \rto \C$. We just need to check the other
  conditions.
  
  \noindent
  (C1) We need to show that $\chi_1\times\chi_2 |_\W$ hits only identity
  morphisms.  If $A \stackrel{\sim}{\rto} B$ then $\chi_1(A) = \chi_1(B)$ and
  $\chi_2(A) = \chi_2(B)$, so $\chi_1\times \chi_2(A) = \chi_1(B) \times
  \chi_2(B)$, as desired.

  \noindent
  (C2) We write $C_i = \chi_i(A)$ for $i=1,2$ in the interests of space.  We
  know that there exists a diagram
  \[\xymatrix@R=1em{
    & C_1\times A \ar[ld] \ar[rd] & & C_2\times A \ar[ld] \ar[rd]\\
    C_1 \ar[rd] && A \ar[ld] \ar[rd] && C_2 \ar[ld]\\
    & C_1 \cup A & & C_2\cup A }\] in $\W$; thus $C_1$ and
  $C_2$ are connected by a zigzag of morphisms in $\W$, and in
  particular we know that $\chi_1(C_2) = C_1$.  Thus we also have
  a diagram
  \[\xymatrix{C_1 \times C_2 \ar[r] \ar[d] & C_2 \ar[d] \\ C_1 \ar[r] &
    C_1 \cup C_2}\]
  in $\W$.  We want to show that the diagram
  \[\xymatrix{C_1\times C_2 \times A \ar[r] \ar[d] & C_1\times C_2 \ar[d] \\ 
    A \ar[r] & (C_1\times C_2) \cup A  }\]
  is is $\W$.  Note that $\chi_2(C_1\times A) = C_2$, so the
  morphism $(C_1 \times A)\times C_2 \rto C_1\times A$ is in
  $\W$.  Thus we have the following diagram,
  \[\xymatrix@C=2em{& (C_1\times C_2)\times A \ar[ld]_\sim  \ar[d]
    \ar[r] & C_1\times
    C_2 \ar[r]^-\sim \ar[d] & C_1 \ar[d]^\sim \\
    A\times C_1 \ar[r]^-\sim & A \ar[r] \ar@/_1.2pc/[rr]_\sim &
    (C_1 \times C_2)\cup A \ar[r] & C_1
    \cup A}
  \]
  where the morphisms that we know are in $\W$ are marked with $\sim$.  The
  fact that the middle square is in $\W$ follows because $\W$ is decomposable.
\end{proof}

To finish the discussion of centers we prove a technical lemma which will help
in the future for constructing WFS.  Classicaly, factorizations are constructed
using a small object argument in some fashion.  In our case we do not do this,
as we want to choose ``bifibrant generators'' rather than cofibrant generators.
It turns out that when we are working with a poset, rather than a more
complicated category, this is fairly straightforward.  To assist with clarity,
we introduce an extra definition.

\begin{definition}
  An object $A$ is defined to be \textsl{semi-fibrant}
  (resp. \textsl{semi-cofibrant}) if there exists a morphism $\chi(A) \rto A$
  (resp. $A \rto \chi(A)$).
\end{definition}

Directly from the definition it follows that any object of the form $\chi(A)
\times A$ (resp. $\chi(A) \cup A$) is semi-cofibrant (resp. semi-fibrant).  In
particular, $\chi(A)$ is both semi-fibrant and semi-cofibrant.

\begin{lemma} \label{lem:fact->wfs} Let $\chi$ be a choice of centers for $(\C,\W)$.
  Suppose that $(\LL, \RR)$ is a pair of classes of morphisms such that
  \begin{enumerate}
  \item \label{f->w1} Both $\LL$ and $\RR$ are closed under composition and $\LL \llp \RR$,
  \item \label{f->w2} $\LL$ is closed under pushouts along morphisms in $\C$ and
    $\RR$ is closed under pullbacks along morphisms in $\C$,
  \item \label{f->w4} All morphisms with semi-fibrant domain are in $\RR$ or all
    morphisms with semi-cofibrant codomain are in $\LL$, and
  \item \label{f->w5} All morphisms in $\W$ factor as a morphism in $\LL$ followed by a
    morphism in $\RR$.
  \end{enumerate}
  Then $(\LL, \RR)$ is a WFS.
\end{lemma}

\begin{proof}
  We prove this assuming that the first part of condition (\ref{f->w4}) holds.
  Since the other conditions are self-dual, the proof for the other part follows
  by duality.
  
  As $\LL \llp \RR$, if all morphisms in $\C$ factor as a morphism in $\LL$
  followed by a morphism in $\RR$ then by \cite[14.1.13]{mayponto} $(\LL, \RR)$
  is a WFS.  Consider any morphism $f:A \rto B$ in $\C$.  We can factor $f$ as
  \[A \stackrel{f'}{\longrightarrow} (A\cup \chi(A))\times B
  \stackrel{f''}{\longrightarrow} B;\] we claim that $f'$ is in $\W$ and $f''$
  is in $\RR$.  Then using condition (\ref{f->w5}) on $f'$ we can write $f' =
  f'_Rf'_L$ and the desired factorization is then
  \[f = \underbrace{f''f'_R}_{\in \RR} \underbrace{f'_L}_{\in \LL}.\]
  The morphism $A \rto A\cup \chi(A)$---which is in $\W$---factors as $A \rto
  (A\cup \chi(A))\times B \rto A\cup \chi(A)$, so since $\W$ is decomposable $f'$ is in $\W$.
  It remains only to check that $(A\cup \chi(A))\times B \rto B$ is in $\RR$.
  
  Because $\chi$ is a functor, there is a morphism $A \cup \chi(A) \rto B\cup\chi(B)$.
  By hypothesis (\ref{f->w4}), this morphism is in $\RR$;
  thus by hypothesis (\ref{f->w2}) its pullback along the morphism $B \rto
  B\cup\chi(B)$ must also be in $\RR$.  Thus $(A\cup\chi(A))\times B \rto B$ is
  in $\RR$.
\end{proof}

\section{Construction of model structures} \label{sec:construction} The goal of
this section is to prove Theorem~\ref{thm:modelexistsall}.  We therefore fix a
relative category $(\C,\W)$ and a choice of centers $\chi$ and use these to
construct a model structure.  As before, we assume that $\C$ is finitely
bicomplete and $\W$ is decomposable.

\begin{lemma} \label{lem:Jchigood}
  Let $\{A_i\}_{i\in I}$ be a family of semi-cofibrant objects such that
  $\coprod_{i\in I} A_i$ and $\coprod_{i\in I} \chi(A_i)$ exist.  Then
  $\coprod_{i\in I} A_i$ is also semi-cofibrant.  Dually, if $\{A_i\}_{i\in I}$
  is a family of semi-fibrant objects such that $\prod_{i\in I} A_i$ and
  $\prod_{i\in I} \chi(A_i)$ exist, then $\prod_{i\in I}A_i$ is semi-fibrant.
\end{lemma}

\begin{proof}
  We prove the first part of the lemma; the second follows by duality.  For all
  $i\in I$ there is a morphism $A_i \rto \coprod_i A_i$, and thus a morphism
  $A_i \rto \chi(A_i) \rto \chi(\coprod_iA_i)$.  Thus there exists a morphism
  $\coprod_i A_i \rto \chi(\coprod_i A_i)$, as desired.
\end{proof}

Recall that, in a poset, in any composition $A \stackrel{f}\rto B \stackrel{g}\rto
C$, $g$ is a pushout of $gf$ and $f$ is a pullback of $gf$.  Thus the
semi-fibrant and semi-cofibrant objects contain a lot of information about which
morphisms ``ought'' to be acyclic cofibrations/fibrations.

\begin{definition}
  Write $Q_\chi$ for the full subcategory of $\W$ with semi-fibrant domain and
  codomain, and $J_\chi$ for the full subcategory of $\W$ with semi-cofibrant
  domain and codomain.  
\end{definition}

Note that if $\W$ is decomposable then the class $Q_\chi$ is decomposable and
the class $J_\chi$ is decomposable.  In addition, a morphism in $\W$ with
semi-fibrant domain (resp. semi-cofibrant codomain) automatically has
semi-fibrant codomain (resp. semi-cofibrant domain).  

\begin{lemma} \label{lem:JchiQchi} Suppose $f:A \rto B$ is a morphism with $B$
  semi-cofibrant.  Then $f \llp Q_\chi$.  Dually, if $A$ is semi-fibrant then
  $J_\chi \llp f$.  In particular, $J_\chi\llp Q_\chi$.
\end{lemma}

In particular,  for any object $A$ in $\C$,
\[(\initial \rto \chi(A)) \llp Q_\chi \qquad\hbox{and}\qquad J_\chi \llp
(\chi(A) \rto *).\]

\begin{proof}
  We prove the first statement; the second follows by duality.  Let $p:X \rto Y \in Q_\chi$, and consider a diagram
  \[\xymatrix{A \ar[r] \ar[d]_f & X \ar[d]^p \\ B \ar[r] & Y}\]
  Applying $\chi$ to the square takes $p$ to the identity morphism on $\chi(X)$,
  and by the defining properties of $Q_\chi$ and $f$ we get a diagram
  \[\xymatrix{A \ar@/^3ex/[rrr] \ar[d]_f & & \chi(X) \ar[r]
    \ar[d]^= & X \ar[d]^p \\ 
    B \ar[r] & \chi(B) \ar[r] \ar@/_3ex/[ru] & \chi(Y) \ar[r] & Y }\]
  This gives the desired lift.
\end{proof}

We would like to identify those morphisms which ``behave like'' acyclic
cofibrations.  Acyclic cofibrations lift on the left of all fibrations;
Proposition~\ref{prop:inCofib} shows that, in a model structure on a poset, all
morphisms with fibrant domain are fibrations.  We thus take our definition of
acyclic cofibrations to be exactly those that lift on the left of the morphisms
with semi-fibrant domain.\footnote{We would like to extend our sincerest thanks to
  the anonymous referee, who pointed out this characterization and greatly
  simplified this portion of the exposition.}

\begin{definition}
  Let $\chi$ be a choice of centers.  We define
  \[\W^\chi_c = {}^\llp \{f:A \to B\in \C\,|\, A \hbox{ semi-fibrant}\}\]
  and
  \[\W^\chi_f = \{f:A \to B\in\C\,|\, B \hbox{ semi-cofibrant}\}^\llp.\]
  In particular $\W^\chi_c$ is closed under pushouts and $\W^\chi_f$ is closed
  under pullbacks.
\end{definition}

By Lemma~\ref{lem:JchiQchi} $J_\chi \subseteq \W_c^\chi$ and $Q_\chi \subseteq
\W_f^\chi$.  As implied by the notation, all morphisms in $\W_c^\chi$ and
$\W_f^\chi$ are weak equivalences:
\begin{lemma}
  \[\W_c^\chi \cup \W_f^\chi \subseteq \W.\]
\end{lemma}
\begin{proof}
  We prove that $\W_c^\chi \subseteq \W$; the result for $\W_f^\chi$ follows
  analogously.  Let $f: X \to Y$ be in $\W_c^\chi$.  Then it must lift on the
  left of $X\cup \chi(X) \to Y\cup \chi(Y)$.   In particular, $X \to X\cup
  \chi(X)$ factors through $f$; thus by decomposition $f$ is a weak
  equivalence, as desired. 
\end{proof}

We now have the following factorization result:

\begin{lemma} \label{lem:all-factor} Every morphism in $\W$ factors as a
  morphism in $\W^\chi_c$ followed by a morphism which is a pullback of a
  morphism in $Q_\chi$.
\end{lemma}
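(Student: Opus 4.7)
The plan is to factor $f : A \to B \in \W$ as $A \to A' \to B$ where $A' = (A \cup C) \times_{B \cup C} B$ and $C = \chi(A) = \chi(B)$ (these agree since $\chi$ sends $\W$ to isomorphisms and $\C$ is skeletal). The key observation is that the induced morphism $A \cup C \to B \cup C$ lies in $Q_\chi$: functoriality of $\chi$ on the weak equivalence $A \to A \cup C$ gives $\chi(A \cup C) = C$, and $C \to A \cup C$ lies in $\W$ by the choice-of-centers square at $A$. Consequently $A' \to B$ is the pullback of a $Q_\chi$-morphism along $B \to B \cup C$, and Lemma~\ref{lem:WcQchi} combined with Lemma~\ref{lem:liftclass} places it in $(\W^\chi_c)^\llp$. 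That both $A \to A'$ and $A' \to B$ are weak equivalences is then automatic from (S2OF3): the pullback projection $A' \to A \cup C$ composed with $A \to A'$ recovers $A \to A \cup C \in \W$, forcing $A \to A' \in \W$, and then (S2OF3) applied to $f$ forces $A' \to B \in \W$.

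It remains to check the defining membership conditions for $\W^\chi_c$ and $\W^\chi_f$. For $A \to A' \in \W^\chi_c$, let $g : A \to Z$ with $A' \cup_A Z \neq Z$; in a poset this reads $A' \not\leq Z$, which together with $A \leq Z$ forces $C \not\leq Z$ (otherwise $A \cup C \leq Z$ and hence $A' \leq Z$). Functoriality of $\chi$ on $A \to Z$ gives $C = \chi(A) \leq \chi(Z)$, so the comparison $A' \cup Z \leq (A \cup C) \cup Z = C \cup Z \leq Z \cup \chi(Z)$ together with the identity $\chi(Z \cup \chi(Z)) = \chi(Z)$ gives $\chi(A' \cup Z) = \chi(Z)$, whence Lemma~\ref{lem:all-ob} puts $Z \to A' \cup Z$ in $\W$. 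The same morphism cannot lie in $Q_\chi$ because $\chi(Z) \leq Z$ would combine with $C \leq \chi(Z)$ to give $C \leq Z$, contradicting $C \not\leq Z$.

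The verification for $A' \to B \in \W^\chi_f$ runs symmetrically, with the coproduct $Z \cup \chi(Z)$ replaced by the product $Z \times \chi(Z)$: for $Z \to B$ with $P = A' \times_B Z \neq Z$ one has $Z \not\leq A \cup C$ and $\chi(Z) \leq \chi(B) = C \leq A \cup C$, which together yield $Z \times \chi(Z) \leq P$; applying $\chi$ and using $\chi(Z \times \chi(Z)) = \chi(Z)$ one deduces $\chi(P) = \chi(Z)$, so $P \to Z \in \W$ by Lemma~\ref{lem:all-ob}. And $Z \leq \chi(Z)$ would give $Z \leq A \cup C$, contradicting $P \neq Z$, so $P \to Z \notin J_\chi$. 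The main obstacle throughout is this bookkeeping of which centers coincide and which morphisms between them actually exist in $\C$; the key technical trick is to reduce all relevant comparisons to Lemma~\ref{lem:all-ob} via the auxiliary objects $Z \cup \chi(Z)$ or $Z \times \chi(Z)$ supplied by the choice-of-centers axiom.
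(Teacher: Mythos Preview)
Your proof is correct and uses the same factorization as the paper: since in a poset the pullback $(A\cup C)\times_{B\cup C}B$ is just the product $(A\cup C)\times B$, your intermediate object $A'$ coincides with the paper's $Y\times(X\cup C)$. The verifications are organized a bit differently---the paper handles $A'\to B\in\W^\chi_f$ by citing $Q_\chi\subseteq\W^\chi_f$ (Lemma~\ref{lem:QchiWf}) and closure under pullback rather than arguing directly, and for $A\to A'\in\W^\chi_c$ it makes a WLOG reduction replacing $Z$ by $Z\cup\chi(Z)$ to force the pushout to collapse, whereas you bound $A'\cup Z\leq Z\cup\chi(Z)$ explicitly and invoke Lemma~\ref{lem:all-ob}---but these are cosmetic rearrangements of the same idea.
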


\begin{proof}
  Suppose that $X \rto Y$ is in $\W$ and let $C = \chi(X) = \chi(Y)$.  We claim that 
  \[X \to Y\times(X\cup C) \to Y\] is the desired factorization.  The morphism
  $Y\times (X \cup C) \to Y$ is a pullback of $X\cup C \to Y\cup C$, which is in
  $Q_\chi$.  It remains to show that $X \to Y\times(X\cup C)$ is in $\W^\chi_c$.
  Let $A \rto B$ be a morphism with $A$ semi-fibrant, and consider a square
  \[\xymatrix{ X \ar[r] \ar[d] & A \ar[d] \\ Y\times (X\cup C) \ar[r] & B }.\]
  To check that a lift exists it suffices to check that a morphism $Y\times (X
  \cup C) \rto A$ exists.  This is given by the composition
  $Y \times (X \cup C) \rto X\cup C \rto A$, where the morphism
  $X \cup C \rto A$ exists because the square gives a morphism $X \rto A$, and
  the fact that $C = \chi(X)$ gives a morphism $C = \chi(X) \rto \chi(A) \rto A$
  (since $A$ is semi-fibrant).
\end{proof}

We are now ready to construct a model structure that depends only on a choice of
centers.  The fibrations in this model structure are defined to be the na\"ive
set of fibrations making the semi-fibrant objects fibrant.

\begin{definition} \label{def:Cchi}
  Given a choice of centers $\chi$, the model structure $\CC^\chi$ is defined by 
  \[\CC^\chi_{we} = \W \qquad \CC^\chi_{fib} = (\W^\chi_c)^\llp \qquad \CC^\chi_{cof} =
  {}^\llp(\CC^\chi_{fib}\cap \CC^\chi_{we}).\]
\end{definition}

\begin{proposition} \label{prop:Cchi}
  $\CC^\chi$ is a model structure.
\end{proposition}

\begin{proof}
  We need to show that $(\CC^\chi_{cof} \cap \CC^\chi_{we}, \CC^\chi_{fib})$ and
  $(\CC^\chi_{cof}, \CC^\chi_{fib}\cap \CC^\chi_{we})$ are WFSs.  We will use
  Lemma~\ref{lem:fact->wfs} for both, and check the conditions simultaneously.  

  \noindent (\ref{f->w1}) $\CC_{we}^\chi$ is closed under composition by
  definition; $\CC_{cof}^\chi$ and $\CC_{fib}^\chi$ are defined by lifting
  properties, and thus are closed under composition by
  Lemma~\ref{lem:liftclass}.  The lifting condition holds by definition for
  $(\CC^\chi_{cof}, \CC^\chi_{fib}\cap \CC^\chi_{we})$.  To prove the lifting
  condition for $(\CC^\chi_{cof}\cap \CC^\chi_{we}, \CC^\chi_{fib})$ it suffices
  to show that $\CC^\chi_{cof} \cap \CC^\chi_{we} \subseteq \W^\chi_c$.  Let $f$
  be in $\CC^\chi_{cof}\cap \CC^\chi_{we}$.  By Lemma~\ref{lem:all-factor} we
  can write $f = f_rf_c$ with $f_c \in \W^\chi_c$ and $f_r$ a pullback of a
  morphism in $Q_\chi$.  Every morphism in $Q_\chi$ is in
  $\W^\chi_f\cap (\W^\chi_c)^\llp \subseteq \CC^\chi_{we}\cap \CC^\chi_{fib}$.
  Thus we have a diagram
  \[\xymatrix{\bullet \cofib[r]^{f_c} \cofib[d]_f & \bullet \fib[d]^{f_r}_\sim \\ \bullet
    \ar[r]^= & \bullet}\]
  which has a lift because $f$ is in $\CC^\chi_{cof}$.  Thus $f = f_c \in
  \W^\chi_c$, as desired.

  \noindent(\ref{f->w2}) First consider
  $(\CC^\chi_{cof}, \CC^\chi_{fib} \cap \CC^\chi_{we})$.  We have
  $\CC^\chi_{cof} = {}^\llp(\CC^\chi_{fib}\cap \CC^\chi_{we})$, so it is
  automatically closed under pushouts.  Now let $f$ be in
  $\CC^\chi_{fib} \cap \CC^\chi_{we}$.  Since by definition $\CC^\chi_{fib}$ is
  closed under pullbacks, it suffices to show that $f\in \W^\chi_f$, which is
  closed under pullbacks by definition.  By Lemma~\ref{lem:all-factor} we can
  factor $f$ as $f_2f_1$, with $f_2\in \W^\chi_f$ and $f_1\in \W^\chi_c$.  Then
  we have the following diagram:
  \[\xymatrix{\bullet \ar[r]^= \ar[d]_{f_1} & \bullet  \ar[d]^f \\ \bullet
    \ar[r]^{f_2} & \bullet}\]
  Since $f$ is in $\CC^\chi_{fib} = (\W^\chi_c)^\llp$, a lift exists in this
  diagram, and we see that $f = f_2$ which is in $\W^\chi_f$, as desired.

  Second consider $(\CC^\chi_{cof}\cap \CC^\chi_{we}, \CC^\chi_{fib})$.  By
  definition we know that $\CC^\chi_{cof}$ is closed under pushouts.  Since
  $\CC^\chi_{cof} \cap \CC^\chi_{we} \subseteq \W^\chi_c$ we know that the pushout of
  any morphism in $\CC^\chi_{cof}\cap \CC^\chi_{we}$ is a weak equivalence, and
  thus $\CC^\chi_{cof}\cap \CC^\chi_{we}$ is closed under pushouts.
  $\CC^\chi_{fib}$ is closed under pullbacks by construction.

  \noindent
  (\ref{f->w4}) The condition is satisfied for
  $(\CC^\chi_{cof} \cap \CC^\chi_{we}, \CC^\chi_{fib})$ by the definition of
  $\CC^\chi_{fib}$. Now consider
  $(\CC^\chi_{cof}, \CC^\chi_{fib}\cap \CC^\chi_{we})$.  We show that all
  morphisms $f:A \rto B$ with $B$ semi-cofibrant lift on the left of
  $\CC^\chi_{fib} \cap \CC^\chi_{we}$, and thus are in $\CC^\chi_{cof}$.
  Consider a factorization of $f$ as $A \rto C \rto B$ with $C \rto B$ in
  $\CC^\chi_{fib} \cap \CC^\chi_{we}$.  Since $C\rto B\in \CC^\chi_{we}$,
  $\chi(C) = \chi(B)$ and thus $C \rto B$ is in $J_\chi \subseteq \W_c^\chi$.
  On the other hand, $C \rto B \in \CC^\chi_{fib} = (\W_c^\chi)^\llp$, so
  $C = B$.  Thus by Lemma~\ref{lem:see-lift},
  $f\llp (\CC^\chi_{fib} \cap \CC^\chi_{we})$.

  \noindent (\ref{f->w5}) By Lemma~\ref{lem:all-factor} all morphisms in $\W$
  factor as a morphism in $\W^\chi_c$ followed by a morphism which is a pullback
  of a morphism in $Q_\chi$.  As
  $\W^\chi_c\subseteq \CC^\chi_{cof}\cap \CC^\chi_{we}$, which was shown in
  (\ref{f->w1}) of this proof, and pullbacks of morphisms in $Q_\chi$ are in
  $\CC^\chi_{we} \cap \CC^\chi_{fib}$ the condition is satisfied for both WFSs.
\end{proof}

By duality we have the following.
\begin{corollary} \label{cor:chiC} Suppose that $\C$ is a finitely bicomplete
  poset, $\W$ is decomposable, and $\chi$ is a choice of centers.  Then the
  structure $^\chi\CC$ defined by
  \[^\chi\CC_{we} = \W \quad {}^\chi \CC_{cof} = {}^\llp (\W^\chi_f) \qquad {}^\chi
  \CC_{fib} = (^\chi\CC_{cof} \cap {}^\chi\CC_{we})^\llp\]
  is a model structure on $\C$.
\end{corollary}

\begin{remark} 
  Before this section, all definitions and results that we have discussed have
  been self-dual. The model structures constructed in this section are not and
  this asymmetry is unavoidable. It arises even when both $\C$, $\W$ and the
  choice of centers are self-dual.

  The following preorder on
  $5$ objects with every morphism considered a weak equivalence provides an
  example.
  \[\xymatrix@R=.7em{
      & \initial \ar[rdd]\ar[ld] \\
      A \ar[dd]  \\
      && C\ar[ldd]  \\
      B \ar[rd]\\
      & \terminal \\}\] The object $C$ is chosen as center.  Then $\initial$ is
  the only semi-cofibrant object and $*$ is the only semi-fibrant object, and
  the morphism $A \rto B$ is in both $\W^\chi_c$ and $\W^\chi_f$.  In any model
  structure on the category, this morphism must be either an acyclic cofibration
  or an acyclic fibration---but not both!---breaking the symmetry.
\end{remark}

We are ready to prove Theorem~\ref{thm:modelexistsall}.

\begin{proof}[Proof of Theorem~\ref{thm:modelexistsall}]
  By Lemma~\ref{lem:model->centers}, any model structure gives a choice of
  centers. By Proposition~\ref{prop:Cchi} a
  choice of centers gives rise to at least one model structure. 
\end{proof}

\section{Model structures on countable posets} \label{sec:countable}

In this section we restrict our attention to pairs $(\C,\W)$ where $\C$ is
countable and $\W$ is decomposable, and show that in this case we can give a
first-order characterization of those pairs that extend to a model structure.

\begin{definition}
  let $W$ be a weak equivalence class in $\C$.  A \emph{proto-center} $P$ for
  $W$ is an object in $W$ such that for all $X\in W$, $X\times P \rto X$ and
  $X \rto X \cup P$ are weak equivalences.  For an object $A$, a
  \emph{proto-center for $A$} is a proto-center in the weak equivalence class
  of $A$.

  A proto-center $P$ is \emph{locally compatible} if 
  \begin{enumerate}
  \item for any morphism $A' \rto A$ such that $A$ is in the same weak
    equivalence class as $P$ there exists a morphism $P' \rto P$ where $P'$ is
    a proto-center in the weak equivalence class of $A'$, and
  \item for any morphism $A \rto A'$ such that $A$ is in the same weak
    equivalence class as $P$ there exists a morphism $P \rto P'$ where $P'$ is
    a proto-center in the weak equivalence class of $A'$.
  \end{enumerate}
\end{definition}

The following lemma shows that proto-centers locally behave the way choices of
centers do: the product of two proto-centers is a proto-center and so is the
coproduct.  (For comparison, see Lemma~\ref{lem:center-product}.)

\begin{lemma} \label{lem:prodproto}
  The set of proto-centers of a weak equivalence class is closed under binary
  products and coproducts.
\end{lemma}

\begin{proof}
  We prove that the product of two proto-centers is a proto-center; the
  closure by coproduct follows by duality.

  Let $P_1$ and $P_2$ be two proto-centers in a weak equivalence class $W$, and
  consider $Q = P_1 \times P_2$.  We need to show that for any $X\in W$,
  $X\times Q \rto X$ and $X \rto X\cup Q$ are in $\W$.  The morphism $X \times Q
  \rto X$ factors as
  \[(X \times P_1) \times P_2 \rto X\times P_1 \rto X.\]
  The first of these is in $\W$ because $P_2$ is a proto-center, and the second
  is in $\W$ because $P_1$ is a proto-center.  
  
  Now consider $X \cup Q$.  The morphism $X \rto X\cup P_1$ is in $\W$, since
  $P_1$ is a proto-center; but this morphism factors as $X \rto X\cup Q \rto
  X\cup P_1$.  Since $\W$ is decomposable each of these must be a weak
  equivalence and we have $X \rto X \cup Q \in \W$.
\end{proof}

\begin{lemma} \label{lem:prodlocproto} Let $W$ and $W'$ be weak equivalence
  classes, and suppose that there exists $f:A \rto A'$ with $A\in W$ and
  $A'\in W'$.  For any two locally compatible proto-centers $Q\in W$ and
  $Q'\in W'$, $Q\times Q'$ is a locally compatible proto-center in $W$ and
  $Q\cup Q'$ is a locally compatible proto-center in $W'$.
\end{lemma}

\begin{proof}
  We begin by showing that $Q\times Q'$ and $Q \cup Q'$ are proto-centers in the
  appropriate weak equivalence classes.  We prove only the statement for
  $Q\times Q'$; the second statement follows by duality.
  
  First, consider the morphism $Q \times Q' \rto Q$; we wish to show that this
  is in $\W$.  Since $Q'$ is locally compatible there exists a proto-center
  $P\in W$ and a morphism $P \rto Q'$.  By Lemma~\ref{lem:prodproto} $P\times Q$
  is also a proto-center, and thus $P\times Q \rto Q$ is in $\W$. The morphism
  $P \times Q \rto Q$ factors as $P\times Q \rto Q\times Q' \rto Q$; thus since
  $\W$ is decomposable, $Q \times Q' \rto Q$ is in $\W$.

  We now check that $Q \times Q'$ is a proto-center.  Let $A \in W$, and
  consider $A \times Q \times Q' \rto A$.  We have a composition
  \[A \times Q \times P \rto A \times Q \times Q' \rto A,\] which is in $\W$
  because $Q \times P$ is a proto-center.  Since $\W$ is decomposable,
  $A \times Q \times Q' \rto A$ is in $\W$, as desired.  Now consider
  $A \rto A \cup (Q \times Q')$.  The morphism $A \rto A\cup Q$ (which is in
  $\W$) factors through $A \cup (Q \times Q')$; thus it is in $\W$,
  as desired.
 
  We now need to check local compatibility of $Q \times Q'$; the statement for
  $Q \cup Q'$ follows by duality.  Let $B'' \rto B$ be any morphism with
  $B\in W$; let $W''$ be the weak equivalence class of $B''$.  Since $Q$ is a
  locally compatible proto-center there exists a proto-center $P''$ in $W''$
  with a morphism $P'' \rto Q$.  Thus there exists a morphism
  $P'' \rto Q \cup Q'$.  Since $Q'$ is locally compatible there exists a
  proto-center $R''\in W''$ with a morphism $R'' \rto Q'$.  Then
  $P'' \times R''$ is a proto-center in $W''$; since there exist morphisms
  $P'' \rto Q$ and $R'' \rto Q'$ there exists a morphism
  $P''\times R'' \rto Q\times Q'$, as desired.

  Now suppose that $B \rto B''$ is any morphism with $B\in W$; let $W''$ be the
  weak equivalence class of $B''$.  Since $Q$ is a locally compatible
  proto-center there exists a proto-center $R''\in W''$ with a morphism
  $Q \rto R''$.  Then $Q\times Q' \rto Q \rto R''$ gives the desired morphism.
\end{proof}

The existence of locally compatible proto-centers implies that there is a
well-defined ordering on weak equivalence classes.

\begin{lemma} \label{lem:ordering} Suppose that $W$ and $W'$ are distinct weak
  equivalence classes containing locally compatible proto-centers $Q\in W$ and
  $Q'\in W'$.  If there exists a morphism $A \rto A'$ with $A\in W$ and
  $A'\in W'$ then there does not exist a morphism $B' \rto B$ with $B'\in W'$
  and $B\in W$.
\end{lemma}

\begin{proof}
  Suppose that both $A \rto A'$ and $B' \rto B$ exist.  Then by
  Lemma~\ref{lem:prodlocproto} applied to $A \rto A'$, $Q \times Q' \in W$. On
  the other hand, by Lemma~\ref{lem:prodlocproto} applied to $B' \rto B$,
  $Q\times Q'\in W'$.  Thus $W \cap W' \neq \emptyset$, a contradiction.  Thus
  both $A \rto A'$ and $B' \rto B$ cannot exist.
\end{proof}

The point of locally compatible proto-centers is that they can be used to
construct approximations to choices of centers.  

\begin{definition}
  A \emph{partial choice of centers} is a functor
  $\tilde \chi: \tilde \C \rto \C$ such that the following properties hold:
  \begin{description}
  \item[PC1] $\tilde \C$ is a full subcategory of $\C$, and if $A$ and $A'$ are
    in the same weak equivalence class and $A\in \tilde \C$ then $A'\in \tilde \C$.
  \item[PC2] The image of $\tilde \chi|_{\W\cap \tilde \C}$ only contains identity morphisms.
  \item[PC3] $\tilde \chi(A)$ is a proto-center for $A$ for all $A\in \tilde \C$.
  \end{description}
\end{definition}
In particular, a partial choice of centers with $\tilde \C = \C$ is a choice of centers.

When we are given a partial choice of centers and a locally compatible
proto-center we can use the proto-center to extend the partial choice of
centers.  We encode the conditions for doing so in the following lemma.

\begin{lemma} \label{lem:pcoc-extend}
  Let $\tilde \chi:\tilde\C \rto \C$ be a partial choice of centers and let $Q$
  be a locally compatible proto-center for a weak equivalence class
  $W\subseteq \W$ which is not in $\tilde \C$.  Suppose that the following two
  conditions hold:
  \begin{enumerate}
  \item For all $A\in \tilde \C$ and $A'\in W$, if there exists a morphism
    $A \rto A'$ then there exists a morphism $\tilde\chi(A) \rto Q$.
  \item for all $A\in \tilde \C$ and $A'\in W$, if there exists a morphism
    $A' \rto A$ then there exists a morphism $Q \rto \tilde\chi(A)$.
  \end{enumerate}
  Then the functor
  \[\tilde\chi'(A) =
  \begin{cases}
    \tilde\chi(A) & \hbox{if } A\in \tilde \C \\
    Q & \hbox{if } A \in W
  \end{cases}\]
  defined on the full subcategory of $\C$ generated by $\tilde\C$ and $W$ is a
  partial choice of centers.
\end{lemma}

\begin{proof}
  We first check that it is a functor.  We have defined it on objects.  To check
  that it is well-defined we must check that it takes morphisms to morphisms.
  For a morphism $A \rto A'$ in $\tilde \C$ it is well-defined because $\tilde
  \chi$ is well-defined.  Given any morphism $A \rto A'$ with $A\in \tilde \C$
  and $A'\in W$, $\tilde\chi'(A \rto A') = \tilde \chi(A) \rto Q$ exists by
  condition (1), thus $\tilde \chi'$ is well-defined on such morphisms.
  Analogously it is well-defined on morphisms $A' \rto A$ with $A \in \tilde \C$
  and $A'\in W$ by condition (2).  It is compatible with composition because all
  maps between posets which are well-defined on objects and morphisms are
  functors.  It satisfies the conditions to be a partial choice of centers by
  definition. 
\end{proof}

We now use the machinery we have built to construct a choice of centers out of
locally compatible proto-centers.

\begin{theorem}
  \label{countableCharacterizationTheorem}
  If each weak equivalence class of $\C$ has a locally compatible proto-center
  and there is only a countable number of weak equivalence classes then there
  exists a choice of centers.
\end{theorem}

\begin{proof}
  Let $\{W_i\}_{i=1}^\infty$ be an enumeration of the weak equivalence
  classes in $\C$; let $\C_n$ be the full subcategory of $\C$ containing
  $\bigcup_{i=1}^n W_i$.  In the interest of conciseness, we also define
  $\C_{n,m}$ for $m>n$ to be the full subcategory of $\C$ containing both $\C_n$
  and $W_m$.  

  We prove the following statement: for each $n\geq 0$ we can construct a pair
  \[\left(\tilde \chi_n\colon \C_n \rto \C, \{Q_m\}_{m=n+1}^\infty\right)\]
  where $\tilde \chi_n$ is a partial choice of centers and for each $m$, $Q_m$
  and $\tilde\chi_n$ satisfy the conditions of Lemma~\ref{lem:pcoc-extend}.  We
  construct these pairs in such a way so that for all $n' > n$,
  $\tilde\chi_{n'}(A) = \tilde\chi_n(A)$ for all $A\in \C_n$.  Using this
  sequence we then define a choice of centers $\chi:\C\rto \C$ by
  \[\chi(A) = \chi_n(A) \quad\hbox{if }A\in W_n.\]
  This will prove the theorem.

  For our base case $n=0$, we let $\tilde\chi_0:\emptyset\rto \C$ be the trivial
  map, and we let $\{Q_m\}_{m=1}^\infty$ be a choice of locally compatible
  proto-centers for each weak equivalence class.  These exist by assumption.
  
  Now consider a general $n$, and suppose that we are given
  $\tilde\chi_{n-1}:\C_{n-1} \rto \C$ and a sequence $\{Q_m\}_{m=n}^\infty$
  such that each $Q_m$ satisfies the conditions of Lemma~\ref{lem:pcoc-extend}.
  We let $\tilde\chi_n$ be the functor constructed in
  Lemma~\ref{lem:pcoc-extend} for $\tilde\chi_{n-1}$ and $Q_n$.  We then define
  the sequence $\{Q_m'\}_{m=n+1}^\infty$ by 
  \[Q_m' =
    \begin{cases}
      Q_m\times Q_{n} & \exists\ A_m \rto A_{n}\hbox{ with }A_m\in
      W_m\hbox{ and } A_{n}\in W_{n},\\
      Q_m \cup Q_{n} &   \exists\ A_{n} \rto A_{m}\hbox{ with }A_m\in
      W_m\hbox{ and } A_{n}\in W_{n}, \\
      Q_m & \hbox{otherwise}.
    \end{cases}\] These conditions are mutually exclusive by
  Lemma~\ref{lem:ordering}.  We need to check that this pair satisfies the
  conditions required by the inductive hypothesis.  In particular, all we need
  to check is that for all $m>n$, $\tilde\chi_n$ and $Q_m'$ satisfy the
  conditions of Lemma~\ref{lem:pcoc-extend}.

  $Q_m'$ is a locally compatible proto-center in $W_m$ by
  Lemma~\ref{lem:prodlocproto}.  Now suppose that $A\in \C_n$ and $A'$ is in
  $W_m$, and suppose that there exists a morphism $A \rto A'$.  If $A\in W_n$ we
  need to show that there exists a morphism $Q_n \rto Q_m'$; but by definition
  $Q_m' = Q_m \cup Q_n$, so this exists.  Now suppose that $A\in W_i$ for
  $i < n$.  By the inductive hypothesis there exists a morphism
  $\tilde\chi_n(A) \rto Q_m$.  When there does not exist a morphism
  $A_m \rto A_n$ (with $A_n\in W_n$ and $A_m\in W_m$) there exists a morphism
  $Q_m \rto Q_m'$, so there exists a morphism $\tilde\chi_n(A) \rto Q_m'$, as
  desired.  If such a morphism $A_m \rto A_n$ exists then
  $Q_m' = Q_n \times Q_m$, so it suffices to check that there exists a morphism
  $\tilde\chi_n(A) \rto Q_n$.  Since $Q_m$ is a locally compatible proto-center,
  there exists a proto-center $P_i\in W_i$ and a morphism $P_i \rto Q_m$.  There
  must also exist a proto-center $P_n\in W_n$ and a morphism $Q_m \rto P_n$.
  Thus there is a morphism $P_i \rto P_n$ which $\tilde\chi_n$ takes to
  $\tilde \chi_n(A) \rto Q_n$.  Thus condition (1) of
  Lemma~\ref{lem:pcoc-extend} holds.  Condition (2) holds by symmetry.
\end{proof}

Since the property of being a proto-center and the property of being a locally
compatible proto-center are first-order properties, we get the
following: 

\begin{corollary}
  The existence of a model structure extending $(\C,\W)$ when $\W$
  only has countably many weak equivalence classes is first order definable.
\end{corollary}

We are ready to tackle Theorem~\ref{thm:second-order}.  

\begin{proof}[Proof of Theorem~\ref{thm:second-order}]
  We construct a pair $(\P,\W)$ where $\P$ is an uncountable poset and $(\P,\W)$
  satisfies all of the conditions of
  Theorem~\ref{countableCharacterizationTheorem} other than the countability of
  $\P$, but which does not extend to a model structure.  By the downward
  L\"owenheim--Skolem theorem, the pair $(\P,\W)$ has an elementarily equivalent
  countable model $(\P',\W')$.  By
  Theorem~\ref{countableCharacterizationTheorem} there is a Quillen model
  structure extending $(\P',\W')$.  This gives two pairs with the same first
  order theory where only one extends to a Quillen model structure; the
  statement of the theorem follows.

  Let $\P$ be the poset of subsets of $\mathbb{N}\times \mathbb{N}$ ordered by
  inclusion regarded as a category, so that there is a morphism $A \rto B$ if
  $A \subseteq B$. Let $\W$ be the subcategory taking the morphisms in $\P$ for
  which the domain and the codomain differ by a finite number of elements.
  
  We claim that the pair $(\P,\W)$ satisfies all conditions of Theorem
  \ref{countableCharacterizationTheorem} except countability. First, since the weak
  classes are closed under products and coproducts, we deduce that in every weak class
  all elements are proto-centers. Second, if one weak class $A$ contains an element
  above an element of a weak class $B$, then every element of $A$ is above some element
  of $B$ and every element of $B$ is below some element of $A$. It follows that 
  every weak class contains a locally compatible proto-center.

  We claim that there is no model structure on $\P$ with $\W$ as category of
  weak equivalences.  By Theorem \ref{thm:modelexistsall}, it suffices to prove
  the nonexistence of a choice of center.

  Suppose for the sake of contradiction that a choice of centers $\chi$ for the
  pair $(\P,\W)$ exists.  Let $R_i=\{i\}\times\mathbb{N}$, and let $(i,k_i)$ be
  any element in $\chi(R_i) \cap R_i$.  Let
  \[X = \bigcup_{i\in \mathbb{N}} (\chi(R_i) \cap R_i - \{(i,k_i)\}).\] 
  There is a diagram
  \[R_i \stackrel{\sim}{\longleftarrow} \chi(R_i) \cap R_i -\{(i,k_i)\}
  \longrightarrow X\] for all $i$;  applying $\chi$ to this produces a morphism
  $f_i:\chi(R_i) \rto \chi(X)$.  Since the symmetric difference between $X$ and
  $\chi(X)$ is finite, there exists an $N$ such that for all $n \geq N$,
  \[\chi(X) \cap R_n = \chi(R_n) \cap R_n - \{(n,k_n)\}.\]
  Thus $\chi(R_n) \not\subseteq \chi(X)$ and $f_n$ cannot exist;
  contradiction.
\end{proof}

\section{Classification of model category structures on posets up to Quillen
  equivalence} \label{sec:comp}

We end this paper with an aside on uniqueness of model structures. We begin by
recalling the definition of Quillen equivalence:

\begin{definition}
  Given two categories $\C$ and $\D$ together with model structures $\CC$ and
  $\DD$, an adjoint pair of functors $F: \C \rightleftarrows \D\,:\! G$ is a
  \emph{Quillen adjunction} if $F$ preserves cofibrations and acyclic cofibrations and $G$ preserves
    fibrations and acyclic fibrations.  It is a \emph{Quillen equivalence} if moreover
  whenever $A\in \C$ is cofibrant and $B\in \D$ is fibrant then the
    morphism $A \rto G(B)$ is a weak equivalence if and only if its adjoint
    $F(A) \rto B$ is a weak equivalence.
  $\CC$ and $\DD$ are called \emph{Quillen equivalent} if there exists a chain of Quillen equivalences between them.
  For a model structure $\CC$, write $\Ho\CC := \C[\CC_{we}^{-1}]$.  If $\CC$
  and $\DD$ are Quillen equivalent then $\Ho\CC$ and $\Ho\DD$ are equivalent.
\end{definition}

We recall without proof some basic properties of Quillen equivalences.  For more details, see \cite[Section 16.2]{mayponto}.
\begin{lemma}
Let $F: \C \rightleftarrows \D \,:\!G$ be an adjoint pair of functors between model categories $\CC$ and $\DD$.
  \begin{enumerate}
  \item  $F$ preserves cofibrations and acyclic cofibrations if and only if $G$ preserves fibrations and acyclic fibration.
  \item If the adjuction is a Quillen adjunction, $F$ reflects weak equivalences and the counit of the adjuction is a weak equivalence for all fibrant objects then it is a Quillen equivalence.
  \end{enumerate}
\end{lemma}

Even if we know that a pair $(\C,\W)$ extends to a model structure there is
still the possibility for non-uniqueness: there might be two model structures
$\CC$ and $\CC'$ extending $(\C,\W)$ that are not Quillen equivalent.  Thus we
have the following question:
\begin{question} $\hbox{ }$ \label{q:MCequiv}
  \begin{enumerate}
  \item If $\CC$ and $\CC'$ are two model structures extending the pair
    $(\C,\W)$, are they Quillen equivalent?
  \item Moreover, if $\CC$ and $\CC'$ are Quillen equivalent, is it possible to construct a chain of Quillen equivalences in which every underlying functor is the identity functor?
  \end{enumerate}
\end{question}
We expect that the answer to (1) is ``yes'', even when $\C$ is not
a poset, and that the answer to (2) is ``yes'' when $\C$ is nice.
Intuitively, if we think of a model structure as a ``choice of coordinates'' on
a relative pair $(\C,\W)$, this says that all choices of coordinates are
equivalent.  

Although we cannot answer the question in general, in this section we prove that
when $\C$ is a poset the answer to (1) is ``yes,'' (Theorem~\ref{thm:C->HoC})
and when $\C$ is bicomplete and all weak equivalence classes in $\W$ are small
the answer to (2) is ``yes'' (Theorem~\ref{thm:modelsequiv}).

\begin{theorem} \label{thm:C->HoC} Let $\CC$ be a model structure on a preorder
  $\C$, and let $\D$ be the full subcategory of the cofibrant fibrant objects in
  $\C$.  Then $\CC$ is Quillen equivalent to the model structure $\DD$ on $\D$
  given by
  \[\DD_{we} = \iso\D \qquad \DD_{cof} = \DD_{fib} = \D.\]
\end{theorem}

In particular, this theorem shows that any two model structures on posets with
isomorphic homotopy categories are Quillen equivalent.  Embedded in the statement of
this theorem is the observation that $\Ho\CC$ must be finitely bicomplete.  In
fact, $\Ho\CC$ will have all limits and colimits that $\C$ does.

Most of the proof of this theorem is contained in the following proposition: 
\begin{proposition} \label{prop:cofibRepl} Let
  $\C^c$ be the full subcategory of cofibrant objects in $\C$.  We
  define \[\CC^c_{we} = \CC_{we}\cap \C^c \qquad \CC^c_{cof} = \CC_{cof}\cap
  \C^c \qquad \CC^c_{fib} = \CC_{fib} \cap \C^c.\] Then $\CC^c$ is model
  structure on $\C^c$ and the inclusion $\iota:\C^c \rto \C$ is the left adjoint in a Quillen
  equivalence $\CC^c \rightleftarrows \CC$.  
\end{proposition}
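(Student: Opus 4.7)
The plan is to verify the model axioms for $\CC^c$ and then exhibit the right adjoint to $\iota$ as cofibrant replacement. I proceed in three steps.

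First I would show that $\C^c$ is bicomplete. Colimits are immediate: by Lemma~\ref{lem:liftclass} $\CC_{cof}$ is closed under pushouts and arbitrary coproducts, and Proposition~\ref{prop:inCofib} ensures that any morphism landing in a cofibrant object is a cofibration; so all colimits of diagrams in $\C^c$ stay inside $\C^c$ and coincide with those in $\C$. For limits I would use a cofibrant replacement trick. Given $F:J \rto \C^c$, let $L = \lim_\C F$ and factor $\initial \rto L$ as $\initial \hookrightarrow L^c \acycfib L$. Then $L^c$ inherits a cone over $F$, and for any competing cone $(X, X\rto F(j))$ with $X \in \C^c$, the universal property in $\C$ produces $X \rto L$, which lifts uniquely to $X \rto L^c$ via the cofibration $\initial \rto X$ and the acyclic fibration $L^c \acycfib L$. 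So $L^c = \lim_{\C^c} F$.

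Second I would verify that $\CC^c$ is a model structure. Axiom (2OF3) is inherited from $\CC$. For each WFS factorization, given $f:A\rto B$ in $\C^c$, factoring in $\CC$ as $A \rto Z \rto B$ where the first map is a cofibration yields $Z\in\C^c$ because $\initial \rto A \rto Z$ is a composition of cofibrations; thus the factorization lives in $\C^c$. The lifting relations $\LL\llp\RR$ for each pair are inherited by restriction. Maximality I would verify directly: if $f\in{}^\llp(\CC^c_{fib}\cap\CC^c_{we})$ in $\C^c$, factor $f$ as $A\hookrightarrow Z \acycfib B$ in $\C^c$, lift $f$ against the acyclic fibration $Z\acycfib B$ to get a map $B\rto Z$, and use skeletality of $\C$ to conclude $Z=B$, whence $f\in\CC_{cof}$. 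The dual maximalities follow by symmetric arguments.

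Third, for the Quillen equivalence I would take the right adjoint $R:\C\rto\C^c$ to be the cofibrant replacement functor $RY=Y^c$, well-defined and functorial by uniqueness of factorizations (Lemma~\ref{uniqueness}). The adjunction $\Hom_\C(\iota X, Y)\cong\Hom_{\C^c}(X, Y^c)$ holds in the poset setting because any map $X\rto Y$ from cofibrant $X$ lifts uniquely through the acyclic fibration $Y^c \acycfib Y$. The functor $\iota$ sends $\CC^c_{cof}$ and $\CC^c_{cof}\cap\CC^c_{we}$ into the corresponding classes in $\CC$ by definition, so it is left Quillen. The unit $X\rto R\iota X$ is the identity since $X\in\C^c$ is already cofibrant, and the counit $\iota RY\rto Y$ is the acyclic fibration $Y^c\acycfib Y$; thus for $X\in\C^c$ and fibrant $Y\in\C$, a map $\iota X \rto Y$ factors as $\iota X \rto Y^c \acycfib Y$, and (2OF3) makes it a weak equivalence iff its adjoint $X \rto Y^c$ is. The main obstacle throughout is constructing limits in $\C^c$: colimits transfer transparently, but limits genuinely require cofibrant replacement together with a lifting argument to establish their universal property.
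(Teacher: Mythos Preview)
Your proposal is correct and follows essentially the same approach as the paper: bicompleteness of $\C^c$ via cofibrant replacement of limits plus a lifting argument, factorizations inherited from $\CC$ because the middle object is automatically cofibrant, and the right adjoint to $\iota$ given by cofibrant replacement with unit the identity and counit the acyclic fibration $Y^c\acycfib Y$. The only cosmetic differences are that the paper reduces bicompleteness to products and coproducts (sufficient in a poset) and invokes \cite[14.1.13]{mayponto} to bypass the explicit maximality check you carry out via the retract argument.
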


This proposition is a special case of \cite[Proposition 17]{balchingarner}; we
present the proof here as the poset case is easier to visualize than the
one-dimensional model structures in \cite{balchingarner}.

\begin{proof} First, note that $\C^c$ is bicomplete\footnote{We
    do not distinguish between finite and small in this case; $\C^c$ will have
    the same ones that $\C$ does.}.  It suffices to check that it has all
  products and coproducts, since equalizers and coequalizers are trivial in a
  poset.  An arbitrary coproduct of cofibrant objects is still cofibrant, so it
  suffices to check that $\C^c$ has all products.  Let $\{A_i\}_{i\in I}$ be a
  tuple of objects of $\C^c$, and let $B = \prod_{i\in I} A_i \in \C$.  We claim
  that the cofibrant replacement (unique by Lemma~\ref{uniqueness}) $B^c$ of $B$
  is the product of $A_i$ in $\C^c$.  Indeed, suppose that a cofibrant object
  $D$ has morphisms $D \rto A_i$ for all $i$.  Then we
  have a diagram \[\xymatrix{\initial \cofib[r] \cofib[d] & B^c \fib[d]^\sim \\
    D \ar[r] & B}\] which has a lift $h:D \rto B^c$.  Thus all cofibrant objects
  with morphisms to $B$ have morphisms to $B^c$.  As morphisms are uniquely
  determined by their source and target this makes $B^c$ into the product of the
  $A_i$ inside $\C^c$, as desired.  Factorizations in $\C$ yield factorizations
  in $\C^c$, so by \cite[14.1.13]{mayponto} $\CC^c$ is a model structure.  We
  define a right adjoint $\gamma$ to $\iota$ by sending each object $A$ to its
  cofibrant replacement; by Lemma \ref{uniqueness}, this is well-defined.  By Lemma \ref{uniqueness} again, $\gamma(\iota(A)) = A$, so
  the unit of the adjunction is the identity transformation.  The counit of the
  transformation is the acyclic fibration $\gamma(A) \acycfib A$.  As $\iota$
  preserves cofibrations and weak equivalences by definition, it is the left adjoint in a
  Quillen adjunction.  Since $\iota$ reflects weak equivalences and the counit of the adjunction is a natural weak
  equivalence, the adjuction is a Quillen equivalence, as desired.
\end{proof} 

We can now prove Theorem~\ref{thm:C->HoC}.  
\begin{proof}[Proof of Theorem \ref{thm:C->HoC}] 
  Let $\C^c$ be the full subcategory of $\C$ containing all cofibrant objects
  and $\CC^c$ be the model structures defined on it by
  Proposition~\ref{prop:cofibRepl}.  By Proposition \ref{prop:cofibRepl} $\CC$
  is Quillen equivalent to $\CC^c$.  Note that $\DD_{cof} = \CC^c_{cof}
  \cap \D$ and $\DD_{fib} = \CC^c_{fib} \cap \D$.  By the dual of Proposition
  \ref{prop:cofibRepl}, $\CC^c$ is Quillen equivalent to $\DD$.
\end{proof}

As an amusing aside, this allows us to classify which model structures on posets
are cofibrantly generated:

\begin{theorem} Let $\C$ be any bicomplete poset, and let $\CC$ a model
  structure on $\C$.  If $\C$ is small then $\CC$ is cofibrantly generated;
  conversely, if $\CC$ is cofibrantly generated then $\C$ is right Quillen
  equivalent to a small model category.
\end{theorem}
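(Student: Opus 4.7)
The plan is to handle the two implications separately. For the forward direction, assume $\C$ is small. I will take $I = \CC_{cof}$ and $J = \CC_{cof} \cap \CC_{we}$ as generating sets. Since $\C$ is small, its class of morphisms is a set, so both $I$ and $J$ are sets. The required characterizations $\CC_{fib} \cap \CC_{we} = I^\llp$ and $\CC_{fib} = J^\llp$ are the lifting halves of the two WFSs of $\CC$. The smallness-of-domains condition in the small object argument is automatic here: since $\C$ is a poset, $\Hom(A, -)$ takes values in $\{\emptyset, *\}$ and hence commutes with every filtered colimit, so every object is small relative to every class.

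For the reverse direction, assume $\CC$ is cofibrantly generated with generating cofibrations $I$. The plan is to apply Proposition~\ref{prop:cofibRepl}: the inclusion $\iota:\C^c \hookrightarrow \C$ is a left Quillen equivalence, so its right adjoint $\gamma:\C \to \C^c$ (cofibrant replacement) is a right Quillen equivalence onto $\C^c$. It therefore suffices to show that $\C^c$ is small. I will exploit the poset structure to describe cofibrant objects very concretely.

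Because $\C$ is skeletal, any retract diagram $X \to Y \to X$ composing to the identity forces $X = Y$, so cofibrations (which in a cofibrantly generated model category are retracts of $I$-cell complexes) are literally $I$-cell complexes; in particular every cofibrant object $X$ is an $I$-cell complex starting from $\initial$. Next I unwind what an $I$-cell complex looks like in a bicomplete poset: a pushout $X_\alpha \cup_A B$ is just the join $X_\alpha \vee B$ (there is at most one morphism with any given domain and codomain, so the universal object is forced to be the join), and colimits at limit ordinals also reduce to joins. Running the transfinite composition from $\initial$, the final object is the join of the set of codomains of those $i \in I$ that get attached. Thus each cofibrant object of $\C$ corresponds to a subset of $\{\mathrm{cod}(i) : i \in I\}$, and since $I$ is a set we get $|\ob \C^c| \leq 2^{|I|}$. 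Combined with Proposition~\ref{prop:cofibRepl} (which ensures $\C^c$ is bicomplete and $\CC^c$ is a model structure), this makes $\C^c$ a small model category and $\gamma$ the desired right Quillen equivalence.

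The main obstacle is precisely this translation of the axiomatic description of cofibrations (``retracts of $I$-cell complexes'') into the explicit poset-theoretic description (``joins of subsets of codomains of $I$''); once that is in place, smallness of $\C^c$ is immediate. The forward direction, by contrast, has no real obstacles, because the smallness conditions in the definition of cofibrantly generated all dissolve in the poset setting.
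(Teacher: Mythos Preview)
Your proposal is correct and follows essentially the same route as the paper. In the reverse direction both arguments reduce via Proposition~\ref{prop:cofibRepl} to showing $\C^c$ is small, and both conclude that every cofibrant object is a join of the codomains of some subset of $I$; the paper reaches this by running the small object argument explicitly and simplifying the resulting pushouts, while you invoke the standard description of cofibrations as retracts of $I$-cell complexes and observe that retracts collapse in a skeletal poset. The two arguments are equivalent in content.

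One minor correction in the forward direction: it is \emph{not} true that $\Hom(A,-)$ preserves all filtered colimits just because its values lie in $\{\emptyset,*\}$. For instance, adjoin to $\omega+1$ an object $a$ with $a\to\omega$ but $a$ incomparable to every finite $n$ (and a bottom element to restore bicompleteness); then $\colim_n\Hom(a,n)=\emptyset$ while $\Hom(a,\colim_n n)=\Hom(a,\omega)=*$. The correct reason the smallness condition holds is that when $\C$ is small, any $\kappa$-filtered chain with $\kappa>|\ob\C|$ must stabilize, so every object is $\kappa$-small relative to any class. This repair is routine and does not affect the validity of your argument.
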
 

\begin{proof} If $\C$ is small then $\CC$ is trivially cofibrantly generated: we
  define the set of generating cofibrations to be the set of all
  cofibrations, and the set of generating acyclic cofibrations to be the set of
  all acyclic cofibrations.

  Now suppose that $\CC$ is cofibrantly generated.  By Proposition
  \ref{prop:cofibRepl} it suffices to show that $\C^c$ is small.  Let $S =
  \{f_i:A_i \rto B_i\}$ be the set of generating cofibrations.  By the small
  object argument, for any object $X\in \C$ we construct its cofibrant
  replacement $\gamma(X)$ by defining $\gamma_0(X) = \emptyset$ and setting
  $\gamma_{n+1}(X)$ to be the pushout of
  \[\xymatrix{\displaystyle{\coprod_{f_i\in S_n} B_i} \ar@{<-^)}[r] & \displaystyle{\coprod_{f_i\in S_n} A_i}  \ar[r] &
    \gamma_n(X)}\]
  where 
  \[S_n = \{f_i\in S\,|\, \Hom(A_i,\gamma_n(X))\times \Hom(B_i, X) \neq
  \emptyset\}.\] Then there is a cofibration $\gamma_n(X) \rto \gamma_{n+1}(X)$
  and the cofibrant replacement of $X$ is $\colim_n \gamma_n(X)$.  Note that by
  definition, $S_n \subseteq S_{n+1}$ for all $n$.  Observe that for any
  nonempty set $T$ and any object $A\in \C$ we have $\coprod_T A = A$; therefore
  \[\gamma_{n+1}(X) = \gamma_n(X) \amalg \coprod_{f_i\in S_n} B_i \cong
  \coprod_{f_i\in S_n} B_i.\]
  Thus if we set $S_\infty =
  \bigcup_{n\geq 0} S_n$ it follows that $\gamma(X) = \coprod_{f_i\in S_\infty} B_i$.
  In particular, all cofibrant replacements correspond to subsets of $S$; as $S$
  is a set, the class of cofibrant objects must also be a set.  Therefore $\C^c$
  is small.
\end{proof}

This theorem allows us to construct model categories which are neither
cofibrantly nor fibrantly generated.
\begin{corollary} \label{cor:non-cg}
  Let $\CC$ be a model structure on a bicomplete poset $\C$ such that $\C^c$ has size
  $2^\kappa$ for some cardinal $\kappa$.  Then if $\CC$ is cofibrantly generated
  it must have at least $\kappa$ generators.  In particular, if $\C^c$ is not
  small then $\CC$ is not cofibrantly generated.  Dually, if $\C^f$ is not small
  then $\CC$ is not fibrantly generated.
\end{corollary}

\begin{ex}
  Corollary~\ref{cor:non-cg} and Theorem~\ref{thm:modelexistsall} give an
  interesting method for producing non-cofibrantly or fibrantly generated
  model structures on posets.  Let $\C$ be any large bicomplete poset.  Take any
  collection $\{A_i\}_{i\in \ob\C}$, where each $A_i$ is a bicomplete poset with
  terminal object $*_i$.  Write
  \[i: \coprod_{i\in \ob\C} \ob A_i \rto \ob \C\]
  for the map taking $X\in A_i$ to $i$.  Let $\tilde \C$ be the poset whose
  objects are $\coprod_{i\in \ob \C} \ob A_i$ and where
  \[\Hom(X,Y) =
    \begin{cases}
      \Hom_{A_i}(X,Y) & \hbox{if } i(X) = i(Y) \\
      \Hom_\C(i(X),i(Y)) &\hbox{if }  i(X) \neq i(Y).
    \end{cases}\] Then $i$ extends to a functor $i: \tilde \C \rto \C$; we
  define $\W$ to be the preimage of the identity morphisms.

  To check that $\C$ is bicomplete it suffices to check that all products and
  coproducts exist. We check products, as coproducts follow analogously.  Let
  $\{B_\alpha\}_{\alpha\in A}$ be a collection of objects in $\tilde \C$, Let
  $i:\tilde \C \rto \C$ be the functor taking an object to its indexing element
  in $\C$. Let $i' = \prod_{\alpha\in A} i(A_\alpha)$, and let $*_{i'}$ be the
  terminal object in $A_{i'}$.  Then
  \[\prod_{\alpha\in A} B_\alpha = \prod_{\substack{\alpha\in A \\ i(\alpha) = i'}} B_\alpha \times *_{i'}.\]

  The subcategory $\W$ is decomposable.  To see this, note that if $X \rto Y$ is
  a morphism in $\W$ then we must have $i(X) = i(Y)$, and thus this is a
  morphism in $A_i$.  If a composition $X \rto Y \rto Z$ is in $\W$ then the
  composition $i(X) \rto i(Y) \rto i(Z)$ is an identity; in particular, since
  $\C$ is a poset we must have $i(Y) = i(X)$, and thus $X \rto Y$ and $Y \rto Z$
  are also in $\W$.   
  
  The map $X \mapsto *_{i(X)}$ is a choice of centers, and by
  Theorem~\ref{thm:modelexistsall} this pair extends to a model structure.
  However, since $\C$ is large this model is neither cofibrantly nor fibrantly
  generated.
\end{ex}

We turn to the second half of Question~\ref{q:MCequiv}. In the case of
posets where every weak equivalence class of $\W$ is small and $\C$ is
bicomplete we answer it.

\begin{theorem} \label{thm:modelsequiv}
  If all weak equivalence classes of $\W$ are small and $\C$ is bicomplete then
  any two model structures extending $(\C,\W)$ are Quillen equivalent via a
  zigzag of equivalences each of whose underlying functors is the identity.
\end{theorem}

We prove this theorem by constructing a ``minimal'' model structure in which a
fixed class $J$ of weak equivalences are actually acyclic cofibrations.  The
construction of this structure is where the assumptions on $\W$ and $\C$ are
necessary.  We begin with a couple of extra technical results about lifting
systems. 

\begin{definition}
  A poset $\C$ is \emph{left-small} with respect to class $\LL$ if for all
  objects $A \in \C$, the class $\{f\in \LL\,|\, \mathrm{dom}\, f = A\}$ is a
  set.  Dually, $\C$ is \emph{right-small} with respect to $\RR$ if for all
  objects $A\in \C$ the class $\{f\in\RR\,|\, \mathrm{codom}\,f = A\}$ is a set.
\end{definition}

\begin{lemma} \label{lem:lifts} Suppose that $(\LL,\RR)$ is a pair of classes of
  morphisms such that $\LL^\llp = \RR$ and $\LL$ contains all isomorphisms.  If
  $\C$ is left-small with respect to $\LL$ and $\LL$ is closed under
  compositions, pushouts in $\C$ and arbitrary coproducts then $(\LL, \RR)$ is a
  WFS.
\end{lemma}

\begin{proof}
  By \cite[14.1.13]{mayponto}, in order to show that $(\LL, \RR)$ is a WFS it
  suffices to show that every morphism $f:A \rto B$ in $\C$ can be factored as
  $f_Rf_L$, where $f_L\in \LL$ and $f_R\in \RR$.

  Let $f:A \rto B$ be any morphism in $\C$.  Let 
  \[S = \{A'\in \C \,|\, A \rto A' \rto B,\ A \rto A'\in \LL\}\] and let $\tilde
  A = \colim S$; note that this is well-defined since $\C$ is left-small with
  respect to $\LL$, $S$ is a set and $\C$ contains all colimits.  The morphism
  $A \rto \tilde A$ can be written as $\coprod_{A'\in S} (A \rto A')$, so $A
  \rto \tilde A\in \LL$.  We claim that $A \rto \tilde A \rto B$ gives the
  desired factorization.  By the definition of $\tilde A$ there are no
  factorizations of $\tilde A \rto B$ through non-invertible morphisms $\tilde A
  \rto Z\in \LL$; thus by Lemma~\ref{lem:see-lift} $\tilde A \rto B\in
  \LL^\llp$, as desired.
\end{proof}

\begin{corollary} \label{cor:max-wfs} If $(\LL, \RR)$ is a MLS (see Definition~\ref{def:mls}) and $\C$ is
  left-small with respect to $\LL$ or right-small with respect to $\RR$ then
  $(\LL,\RR)$ is a WFS.
\end{corollary}

We are now ready to construct our minimal model structure.

\begin{proposition} \label{prop:genMC} Let $\C$ be a bicomplete poset, $\W$ a
  decomposable subcategory and $J$ a class of morphisms in $\W$.  We define
  \[\CC^J_{we} = \W \qquad \CC^J_{fib} = J^\llp \qquad \CC^J_{cof} =
  {}^\llp(\CC^J_{we}\cap \CC^J_{fib}).\]
  $\CC^J$ is a model structure if the following extra assumptions hold:
  \begin{enumerate}
  \item \label{genMCstr} All connected components of $\W$ are small.
  \item \label{genMCass2} $(\CC^J_{cof})^\llp \subseteq \W$.
  \item \label{genMCass3} ${}^\llp \CC^J_{fib} \subseteq \W$.
  \end{enumerate}
\end{proposition}

This proposition also has a dual version, which defines a model structure $^J\CC$ with $^J\CC_{cof} = {}^\llp J$.

\begin{proof}
  Since $\W$ is decomposable it must also satisfy (2OF3).  By (\ref{genMCstr})
  $\C$ is left-small with respect to $\CC^J_{cof}\cap \CC^J_{we}$ and right-small
  with respect to $\CC^J_{fib}\cap \CC^J_{we}$.  Thus by Corollary~\ref{cor:max-wfs}
  in order to show that (WFS) holds it suffices to check that both $(\CC^J_{cof},
  \CC^J_{fib}\cap \CC^J_{we})$ and $(\CC^J_{cof}\cap \CC^J_{we}, \CC^J_{fib})$ are MLSs.

  To check that a pair $(\LL,\RR)$ is an MLS we must check that  $\LL = {}^\llp \RR$ and that $\LL^\llp \subseteq \RR$.  We begin by showing that the first of these holds for both pairs. For $(\CC^J_{cof},
  \CC^J_{fib}\cap \CC^J_{we})$, this is true by definition. Now consider  $(\CC^J_{cof}\cap \CC^J_{we}, \CC^J_{fib})$. Suppose that $i:A \rto
  B$ is in $\CC^J_{cof}\cap \CC^J_{we}$.  By the dual of Lemma~\ref{lem:see-lift},
  $i\llp \CC^J_{fib}$ if and only if all factorizations $A \rto Z \rto B$ with $Z
  \rto B$ in $\CC^J_{fib}$ have $Z = B$.  As $\W$ is decomposable such a factorization has $Z
  \rto B$ in $\CC^J_{we}$; since $i\in \CC^J_{cof}$ it lifts on the left of $Z\rto
  B$ and we must have $Z = B$, as desired.  Thus $\CC^J_{cof} \cap \CC^J_{we} \subseteq {}^\llp\CC_{fib}$.  On the other hand,  by assumption (\ref{genMCass3}),  $^\llp\CC_{fib} \subseteq \W$ and
  \[^\llp\CC^J_{fib} \subseteq {}^\llp (\CC^J_{fib} \cap \CC^J_{we}) = \CC^J_{cof}.\]
  Thus $\CC^J_{cof}\cap \CC^J_{we} \supseteq {}^\llp\CC^J_{fib}$ and equality holds.
  
   We now turn to proving that $\LL^\llp \subseteq \RR$ for both pairs.

  First consider $(\CC^J_{cof}, \CC^J_{fib}\cap \CC^J_{we})$. Note that
  \[J \subseteq {}^\llp(J^\llp) = {}^\llp \CC^J_{fib} \subseteq {}^\llp(\CC^J_{fib}
  \cap \CC^J_{we}) = \CC^J_{cof}.\]
  Thus $(\CC_{cof}^J)^\llp \subseteq J^\llp = \CC^J_{fib}$.  By assumption (\ref{genMCass2}) we know
  that it is also a subset of $\CC^J_{we}$, as desired.
  Now consider $(\CC^J_{cof}\cap \CC^J_{we}, \CC^J_{fib})$. From before, $J\subseteq \CC^J_{cof}\cap \CC^J_{we}$, from which it
  follows that
  \[(\CC^J_{cof}\cap \CC^J_{we})^\llp \subseteq J^\llp = \CC^J_{fib}.\]
\end{proof}

We can now start comparing different model structures on a poset.  

\begin{proposition} \label{prop:newcofib} Suppose that $\CC$ is any model
  structure on $\C$ with $\CC_{we} = \W$ and $\chi$ is any choice of centers for $\W$.  Let $J = (\CC_{cof} \cap \CC_{we})
  \cup J_\chi$.  
  If all connected components in $\W$ are small,
  then $\CC^J$ is another model structure on $\C$, and the identity functor gives
  a Quillen equivalence $\CC\rightleftarrows \CC^J$.  
\end{proposition}

\begin{proof}
  We use
  Proposition \ref{prop:genMC}.  Condition (\ref{genMCstr}) is assumed, so we
  just need to check (\ref{genMCass2}) and (\ref{genMCass3}).

  \noindent
  (\ref{genMCass2}) Note that $\CC^J_{fib} \subseteq \CC_{fib}$.  Thus $\CC^J_{cof} \supseteq \CC_{cof}$ and $(\CC^J_{cof})^\llp \subseteq \CC_{cof}^\llp \subseteq \W$.

  \noindent
  (\ref{genMCass3}) Suppose $f:A \rto B$ is such that $1_A\in Q_\chi$ and $f\llp
  \CC'_{fib}$.  Factor $f = f_ff_{ac}$ with $f_f:B' \rto B\in\CC_{fib}$ and
  $f_{ac}\in \CC_{cof}\cap \CC_{we}$.  We claim that $f_f\in \CC^J_{fib}$.  Because $\CC_{cof} \cap \CC_{we} \llp f_f$, it suffices to
  show that $\W_c^\chi \llp f_f$.  By Lemma~\ref{lem:see-lift}, it suffices to
  check that any factorization of $f_f$ as $B' \rto Z \rto B$ with $B' \rto Z$
  in $\W_c^\chi$ must have $B' = Z$.  But $B' \rto Z$ is in $Q_\chi$, so by definition it is an identity and $B' = Z$.
  Since $f_f\in \CC^J_{fib}$, $f\llp f_f$ and we must have $f = f_{ac} \in
  \CC'_{we}$.

  Now suppose $f\in {}^\llp\CC^J_{fib}$ is arbitrary.  Let $f'$ be the pushout of
  $f$ along $A \rto A\cup \chi(A)$.  Since $f'$ is the pushout of $f$ it is also
  in ${}^\llp\CC'_{fib}$, and as $A \rto A\cup\chi(A)$ is in $\CC^J_{we}$, $f$ is
  a weak equivalence if and only if $f'$ is.  By the above, $f'\in \CC^J_{we}$,
  so we conclude that so is $f$.

  The identity functor gives a Quillen equivalence $\CC\rightleftarrows \CC^J$ because
  the weak equivalences of the two structures are the same, and $\CC_{cof}
  \subseteq \CC^J_{cof}$.
\end{proof}

We are now ready to prove Theorem~\ref{thm:modelsequiv}.

\begin{proof}[Proof of Theorem \ref{thm:modelsequiv}] 
  Let $\CC_1,\CC_2$ be any model structures on $\C$ with weak equivalences $\W$,
  and let $\chi_i$ be the choice of centers given by $\CC_i$.  Let $J_i = ((\CC_i)_{cof}\cap\W) \cup J_{\chi_i}$ and let $\CC^{\chi_i}$ be the model structure constructed in
  Definition~\ref{def:Cchi}.  Let $\chi = \chi_1\times \chi_2$; by
  Lemma~\ref{lem:center-product} this is another choice of centers.  Note that
  $\W_c^{\chi_i} \subseteq \W_c^{\chi}$ since $Q_{\chi} \subseteq Q_{\chi_i}$.
  Thus $(\W_c^{\chi_i})^\llp \supseteq (\W_c^{\chi})^\llp$, and the identity
  functor gives a Quillen equivalence $\CC^{\chi_i} \rightleftarrows \CC^{\chi}$.  Thus for $i = 1,2$ the identity functor gives a zigzag of Quillen equivalences 
  \[\CC_i \rightleftarrows \CC^{J_i} \leftrightarrows \CC^{\chi_i} \rightleftarrows
  \CC^\chi,\] and the two model structures are equivalent.
\end{proof}

\end{document}